\newtheorem{thm}{Theorem}[section]
\newtheorem{lem}[thm]{Lemma}
\newtheorem{prop}[thm]{Proposition}
\newtheorem{cor}[thm]{Corollary}
\newtheorem{add}[thm]{Addendum}
\theoremstyle{definition}
\newtheorem{rem}[thm]{Remark}
\newtheorem{defn}[thm]{Definition}
\newtheorem{ex}[thm]{Example}
\def\Z{\mathbb{Z}}
\def\bfn{\mathbf{n}}
\def\bfm{\mathbf{m}}
\def\calc{\mathcal{C}}
\def\R{\mathcal{R}}
\def\K{\mathcal{K}}
\def\M{\mathcal{M}}
\def\lra{\longrightarrow}
\newcommand{\we}{\overset{\sim}{\longrightarrow}}
\def\leq{\leqslant}
\def\geq{\geqslant}
\def\id{\mathrm{id}}
\def\ra{\rightarrow}
\DeclareMathOperator{\colim}{colim}
\DeclareMathOperator{\hocolim}{hocolim}
\DeclareMathOperator{\ob}{ob}
\def\ie{\emph{i.e.}}
\def\bc{\text{rig category}\xspace}
\newcommand{\bcs}{\text{rig categories}\xspace}
\newcommand{\sbc}{\text{strictly bimonoidal category}\xspace}
\begin{document}
\title[
Stable bundles over rig categories]{
Stable bundles over rig categories}
\author{Nils A. Baas, Bj{\o}rn Ian Dundas, Birgit Richter and John Rognes}
\address{Department of Mathematical Sciences, NTNU,
  7491 Trondheim, Norway}
\email{baas@math.ntnu.no}
\address{Department of Mathematics, University of Bergen,
  5008 Bergen, Norway}
\email{dundas@math.uib.no}
\address{Fachbereich Mathematik der Universit\"at Hamburg,
  20146 Hamburg, Germany}
\email{richter@math.uni-hamburg.de}
\address{Department of Mathematics, University of Oslo,
  0316 Oslo, Norway}
\email{rognes@math.uio.no}
\date{\today}
\thanks{
The first author would like to thank the Institute for Advanced Study,
Princeton, for their hospitality and support during his stay in the
spring of 2007.
Part of the work was done while the second author was on sabbatical at
Stanford University, whose hospitality and stimulating environment is
gratefully acknowledged.
The third author thanks 
the SFB 676 for support and 
the topology group
in Sheffield for stimulating discussions on the subject.}
\hyphenation{ca-te-go-ries ca-te-go-ry}

\keywords{Algebraic $K$-theory, topological $K$-theory, 2-vector bundles,
  elliptic cohomology, bimonoidal categories, bipermutative categories}
\subjclass[2000]{Primary 19D23, 55R65; Secondary 19L41, 18D10}
\begin{abstract}
The point of this paper is to prove the conjecture that virtual
$2$-vector  bundles are classified by $K(ku)$, the algebraic
$K$-theory of  topological $K$-theory.  Hence, by the work of Ausoni
and the  fourth author, virtual $2$-vector bundles give us a geometric
cohomology theory of the same telescopic complexity as elliptic cohomology.


The main technical step is showing that for well-behaved small
\bcs~$\R$ (also known as bimonoidal categories) the algebraic
$K$-theory  space, $K(H\R)$, of the ring spectrum $H\R$ associated
to $\R$ is equivalent  to  $\K(\R) \simeq \Z \times |BGL(\R)|^+$,
where $GL(\R)$ is the  monoidal category of weakly invertible
matrices over~$\R$.

The title refers to the sharper result that $BGL(\R)$ is equivalent to
$BGL(H\R)$.
If $\pi_0\R$ is a ring this is almost formal, and our approach is to
replace  $\R$ by  a ring completed version,
$\bar\R$, provided by \cite{BDRR1} with $H\R\simeq H\bar\R$ and
$\pi_0\bar\R$  the ring completion of $\pi_0\R$.  The remaining step
is then to  show that ``stable $\R$-bundles'' and ``stable
$\bar\R$-bundles''  are the same, which is done by a hands-on
contraction of a  custom-built model for the difference between
$BGL(\R)$ and  $BGL(\bar\R)$.


\end{abstract}
\maketitle
\section{Introduction and main result}
In telescopic complexity $0$, $1$ and $\infty$ there are cohomology
theories that possess a geometric definition: de Rham cohomology of
manifolds is given in terms of differential forms, cohomology
classes in real and complex $K$-theory are classes of virtual vector
bundles, and complex cobordism has a geometric definition \emph{per se}.
In order to understand phenomena of intermediate telescopic complexity, it
is desirable to have geometric interpretations for such cohomology
theories as well.

In \cite{BDR} it was conjectured that virtual $2$-vector bundles
provide a  geometric interpretation of a cohomology theory of
telescopic  complexity $2$ which qualifies as a form of elliptic
cohomology.  More precisely, it was conjectured that the algebraic
$K$-theory of a commutative \bc $\R$ is equivalent to the
algebraic $K$-theory of the  ring spectrum associated with $\R$.
The case of virtual $2$-vector  bundles arises when $\R$ is the
category of finite dimensional  complex vector spaces, with $\oplus$
and $\otimes_\mathbf C$ as  sum and multiplication.  This, together
with the analysis of the  $K$-theory of complex topological
$K$-theory due to Ausoni and
the fourth  author \cite{AR, A}, and the 
Quillen--Lichtenbaum
conjecture for the integers, gives the desired relation to elliptic
cohomology.

In this paper we prove the conjecture from \cite{BDR}.
This work was motivated by the study of extended topological
quantum field theories and the search for a geometric definition
of elliptic cohomology, see \cite{BDR}, \cite{F}, \cite{K} and \cite{L}.

Let $\R$ be a 
\bc (also known as a bimonoidal category), \ie,
a category with two operations $\oplus$ and $\otimes$
satisfying the axioms of a 
rig (ring without negative elements) up to coherent natural
isomorphisms
.  In analogy with Quillen's
definition of the algebraic $K$-theory space $K(A)=\Omega
B(\coprod_n BGL_n(A))$ of a ring $A$, the
algebraic $K$-theory of $\R$ was defined in \cite{BDR} as
$\K(\R)=\Omega B(\coprod_n |BGL_n(\R)|)\simeq\Z\times |BGL(\R)|^+$
where $B$  and $GL_n$ are
versions of the bar construction and
the general linear group appropriate for \bcs.

On the other hand, forgetting the multiplicative structure, $\R$ has an
underlying symmetric monoidal category, and so it makes sense to speak
about its $K$-theory spectrum $H\R$ with respect to $\oplus$. The
$K$-theory spectrum construction $H\R$ is a direct extension of the usual
Eilenberg--Mac\,Lane construction, and can, since $\R$ is a \bc, be endowed
with the structure of a strict ring spectrum, for instance through the
model given by Elmendorf and Mandell in \cite{EM}.  Hence, we may speak
about its algebraic $K$-theory space $K(H\R)$.  We prove that, under
certain mild restrictions on $\R$, there is an equivalence $$\K(\R)\simeq
K(H\R) \,.$$

In the special situation where $\R$ is a ring
(\ie, $\R$ is discrete as a category and has negative
elements), this is the standard assertion that the $K$-theory of a ring
is equivalent to the $K$-theory of its associated Eilenberg-Mac\,Lane
spectrum.  The key difficulty in establishing the equivalence above
lies in proving that the lack of negative elements makes no difference
for algebraic $K$-theory, even for 
\bcs.

More precisely, we prove the following result:

\begin{thm} \label{thm:main}
Let $(\R, \oplus, 0_\R, \otimes,  1_\R)$ be a small  topological 
\bc satisfying the following conditions:
\begin{enumerate}
\item
All morphisms in $\R$ are isomorphisms, 
\ie, $\R$ is a groupoid.
\item
For every object $X \in \R$ the translation functor
$X \oplus (-)$ is faithful.
\end{enumerate}

Then $|BGL(\R)|$ and $B{GL}(H\R)$ are weakly equivalent.
Hence, the algebraic $K$-theory space of $\R$ as a \bc,
$$\K(\R) =
\Omega B\Big(\coprod_{n\geq0} |BGL_n(\R)|\Big)
\simeq \Z \times |BGL(\R)|^+ \,,$$
is weakly equivalent to the algebraic $K$-theory space of the strict
ring spectrum associated to $\R$,
$$K(H\R)=\Omega B\Big(\coprod_{n\geq0} B{GL}_n(H\R)\Big) \simeq
\Z \times B{GL}(H\R)^+ \,.$$
\end{thm}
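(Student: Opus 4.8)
\emph{Proof plan.}
The crux is the first assertion, that $|BGL(\R)|$ and $BGL(H\R)$ are weakly equivalent; granting it, the two displayed formulas identifying $\K(\R)$ and $K(H\R)$ with $\Z\times|BGL(\R)|^+$ and $\Z\times BGL(H\R)^+$ are instances of the group-completion theorem (for \bcs as recorded in \cite{BDR}, for strict ring spectra as usual), and since the plus construction and $\Z\times(-)$ carry weak equivalences to weak equivalences we get $\K(\R)\simeq K(H\R)$ at once. So the plan is to prove $|BGL(\R)|\simeq BGL(H\R)$ by reducing to the case in which $\pi_0\R$ is a ring, using the ring completion $\iota\colon\R\to\bar\R$ of \cite{BDRR1}: $\bar\R$ is again a small topological \bc satisfying (1) and (2), $\pi_0\bar\R$ is the ring completion of $\pi_0\R$ (hence a ring), and $H\iota\colon H\R\to H\bar\R$ is a stable equivalence. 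Because $R\mapsto GL_n(R)$ on ring spectra is homotopy invariant --- it is built from $M_n(-)$, $\Omega^\infty$ and a pullback along $\pi_0$, and $H\iota$ is a $\pi_0$-isomorphism --- the map $H\iota$ induces $BGL(H\R)\we BGL(H\bar\R)$. It therefore suffices to prove:
\begin{enumerate}
\item[(A)] if $\pi_0\R$ is a ring, then $|BGL(\R)|\simeq BGL(H\R)$;
\item[(B)] for $\R$ as in the theorem, $\iota$ induces a weak equivalence $|BGL(\R)|\we|BGL(\bar\R)|$.
\end{enumerate}
Splicing (A) for $\bar\R$ with (B) and with $BGL(H\R)\simeq BGL(H\bar\R)$ then proves the theorem.

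\emph{Step (A): the case $\pi_0\R$ a ring.} This is the ``almost formal'' part. Under $\oplus$ the matrix category $M_n(\R)$ is symmetric monoidally equivalent to $\R^{\times n^2}$, so the Elmendorf--Mandell construction \cite{EM} gives a stable equivalence $H(M_n\R)\simeq M_n(H\R)$ of ring spectra. After realization the symmetric monoidal structure of $M_n(\R)$ defines a special $\Gamma$-space with underlying space $|M_n(\R)|$ and with $\pi_0$ the monoid $M_n(\pi_0\R)$; as $\pi_0\R$ is a ring this monoid is a group, so the $\Gamma$-space is very special and the group-completion map
$$|M_n(\R)|\;\lra\;\Omega^\infty H(M_n\R)\;=\;\Omega^\infty M_n(H\R)$$
is already a weak equivalence. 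Passing to the path components over $GL_n(\pi_0\R)\subseteq M_n(\pi_0\R)$ --- which on the source is $|GL_n(\R)|$ and on the target is $GL_n(H\R)$ by definition --- gives a weak equivalence $|GL_n(\R)|\we GL_n(H\R)$ of grouplike topological monoids under matrix multiplication (both have group of components $GL_n(\pi_0\R)$, and the group-completion map is multiplicative because the passage from a \bc to its ring spectrum is). Delooping and taking the colimit over $n$ under block sum yields $|BGL(\R)|\we BGL(H\R)$. Condition (2) is used only to keep the bar and $\Gamma$-space constructions levelwise well-behaved, so that geometric realization commutes with the homotopy operations above.

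\emph{Step (B): stable $\R$-bundles versus stable $\bar\R$-bundles.} This is the main obstacle, precisely because $\iota$ is far from an equivalence of categories: it enlarges $\pi_0\R$ to its ring completion, and although $H\iota$ is a stable equivalence for the additive structure, the matrix multiplication entering $BGL$ couples the two monoidal structures in a way that is \emph{a priori} sensitive to this enlargement before group completion. The plan is to construct, stably in $n$, a custom-built model $\M$ interpolating between $GL(\R)$ and $GL(\bar\R)$ --- a bisimplicial (or bicategorical) object, morally a two-sided bar construction in which $\bar\R$-valued entries are allowed ``on one side'' while the $\R$-structure is retained ``on the other'' --- arranged so that $|\M|$ receives compatible maps to $|BGL(\R)|$ and $|BGL(\bar\R)|$ realizing the comparison map as (the realization of) a zig-zag through $\M$, and so that (B) is reduced to the weak contractibility of a single ``space of choices'' that measures the difference between the two bar constructions. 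The heart of the argument is then an explicit, hands-on contracting homotopy on this space, built from the formal additive inverses furnished by the ring completion together with the faithfulness of the translations $X\oplus(-)$ from condition (2); faithfulness is exactly what makes the combinatorics of the contraction unambiguous and compatible with block stabilization. Granting the contraction, the difference collapses and one obtains the weak equivalence $|BGL(\R)|\we|BGL(\bar\R)|$, completing the proof. As indicated, the hard part is to pin down the right model $\M$ and to verify that the contraction exists and is natural enough to survive the colimit over $n$.
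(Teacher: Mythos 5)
Your two-step decomposition — (A) the group-complete case is nearly formal, (B) ring completion does not change $|BGL|$ — is exactly the architecture of the paper's proof. Step (A) corresponds to the paper's Lemma~\ref{lem:glh}, and your Step (B) corresponds to the reduction via Proposition~\ref{prop:onesided is fiber} and the contraction in Section~\ref{sec:contraction}. Two points are worth flagging. First, the paper's ring completion from \cite{BDRR1} is not a single functor $\iota\colon\R\to\bar\R$ but a zig-zag $\R\xleftarrow{\sim}Z\R\to\bar\R$ of rig maps, so the comparison $|BGL(\R)|\to|BGL(\bar\R)|$ in your Step (B) is really implemented through $Z\R$; this is a minor bookkeeping change (restrict modules along $Z\R\to\R$) but it matters for making the diagram of $BGL$'s honestly commute. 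Second, and more substantively, your Step (B) stops at a description of \emph{what} should be built rather than building it: the paper identifies the homotopy fiber of $BGL(Z\R)\to BGL(\bar\R)$ with the one-sided bar construction $B(*,GL(Z\R),GL(\bar\R))$, then replaces the module $\bar\R$ (up to weak equivalence of $Z\R$-modules, using \cite{BDRR1} again) by the concrete 2-categorical Grayson--Quillen model $T\R$ of Subsection~\ref{sub:TM}, and only then writes down the hands-on contraction of $B(*,GL(\R),GL(T\R))$ — itself requiring the edgewise-subdivision apparatus ($z^*$, $z_*$, Lemma~\ref{lem:split-mono}) to have room for the explicit homotopies $\li\rightsquigarrow\lj\rightsquigarrow\lk\rightsquigarrow\lnc$. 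That construction is the actual content of the theorem and is not present in the proposal; without it, Step (B) is a correct strategic outline but not a proof. Conditions (1) and (2) enter precisely there, via Grayson's theorem guaranteeing that $T\R\to(-\R)\R$ is an equivalence and that $(-\R)\R$ is a group completion, not (as you suggest in Step (A)) merely to keep realizations well behaved.
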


\begin{add}
In particular, if $\R$ is the category of finite dimensional complex
vector spaces, the theorem states that stable $2$-vector bundles, in
the sense of \cite{BDR} are
classified by $BGL(ku)$, where $ku=H\R$ is the connective complex $K$-theory spectrum
with $\pi_*ku=\Z[u]$, $|u|=2$.
\end{add}

In contrast to $K(H\R)$, which is built in a
two-stage process, the $K$-theory of the (strictly) bimonoidal
category $\R$ is built using both monoidal structures at once, so in
this sense $\K(\R)$ is a model that is easier to understand and
handle than $K(H\R)$.

The conditions (1) and (2) on $\R$ are not restrictive for the applications we
have in mind, and are associated with the fact that in \cite{BDRR1} we
chose  to work with variants of the Grayson--Quillen
model for $K$-theory. Probably, the restrictions can be removed if one
uses another technological platform.

Among those \bcs that satisfy the requirements of
Theorem \ref{thm:main} are the following `standard' ones, usually
considered in the context of $K$-theory constructions.

\begin{itemize}
\item
If $\R$ is the discrete category (having only identity morphisms)
with objects the elements of a 
ring with unit, $R$, then $H\R$ is the
Eilenberg-Mac\,Lane spectrum $HR$.
\item
The sphere spectrum $S$  is the algebraic $K$-theory spectrum of the small
\bc of finite sets~$\mathcal{E}$. The objects of
$\mathcal{E}$ are the finite sets $\bfn = \{1,\ldots,n\}$ for 
$n \geq 0$, with
the convention that $\mathbf{0}$ is the empty set. Morphisms from
$\bfn$ to $\bfm$ only exist 
for $n=m$, and in
this case they constitute 
the symmetric group on $n$ letters. The
algebraic $K$-theory of $S$ is equivalent to Waldhausen's $A$-theory of
a point $A(*)$ \cite{1126}, and so gives information about
diffeomorphisms of high dimensional disks.
Thus we obtain that
$$ A(*)\simeq K(S) \simeq \K(\mathcal{E}) \simeq
\mathbb{Z} \times |BGL(\mathcal{E})|^+ \,.$$
\item
For a 
commutative
ring $A$ we consider the following small
\bc of finitely generated free $A$-modules,
$\mathcal{F}(A)$. Objects of $\mathcal{F}(A)$  are the finitely
generated  free $A$-modules
$A^n$ for $n \geq 0$. The set of morphisms from $A^n$ to $A^m$ is empty
unless $n=m$, and the morphisms from $A^n$ to itself are the
$A$-module automorphisms of $A^n$, \ie, $GL_n(A)$. Our result
allows us to identify the two-fold iterated algebraic $K$-theory of $A$,
$K(K(A))$,
with $\mathbb{Z} \times |BGL(\mathcal{F}(A))|^+$.
\item
The case that started the project is the category of $2$-vector spaces
of Kapranov
and Voevodsky \cite{KV}, viewed as modules over the \bc $\mathcal{V}$
of complex (Hermitian) vector spaces.  Here $\mathcal{V}$
has objects $\mathbb{C}^n$ for $n \geq 0$, and the
automorphism space of $\mathbb{C}^n$ is the unitary group $U(n)$.
This identifies $K(H\mathcal{V}) = K(ku)$ with $\K(\mathcal{V})
\simeq \mathbb{Z} \times |BGL(\mathcal{V})|^+$,
which was called the $K$-theory of the $2$-category of complex $2$-vector
spaces in \cite{BDR}.
Ausoni's calculations \cite{A} show that
$K(ku_p)$ has telescopic complexity $2$ for every prime
$\geq 5$, and  thus  qualifies as a form of elliptic cohomology.
\item
Replacing the complex numbers by the reals yields an identification of
$K(ko)$ with the $K$-theory of the $2$-category of real $2$-vector
spaces.
\item
Considering other subgroups of $GL_n(\mathbb{C})$ or
$GL_n(\mathbb{R})$ as morphisms in 
a category with objects $\bfn =
\{1,\ldots,n\}$ with  $n \geq 0$ gives a large variety of $K$-theory
spectra that are in the range of our result. For a sample of such
species have a look at \cite[pp.~161--167]{May2}.
\end{itemize}


\subsection{The spine of the argument giving a proof of Theorem \ref{thm:main}}
Although the proof contains some lengthy technical lemmas, it is
possible to  give the main flow of the argument in a few paragraphs,
referring  away the hard parts.

Remember that the group-like monoid $GL_n(H\R)$ is defined by the pullback
$$
\xymatrix{
{GL_n(H\R)} \ar[rr] \ar[d] & & {\hocolim_{\bfm \in I} \Omega^m M_n(H\R
  (S^m))}  \ar[d]\\
{GL_n(\pi_0 H\R)} \ar[rr] & & {M_n(\pi_0 H\R) \rlap\,.}
}
$$
where $I$ is 
a strictly monoidal skeleton of the category of
finite sets  and injective functions, making $GL_n(H\R)$ a group-like
monoid  (here we have written $H\R$ in the form of a simplicial
functor with  associated symmetric spectrum $m\mapsto H\R(S^m)$).

Let $M_n(\bar{\R})$ be the monoidal category of $n\times n$-matrices
over $\bar{\R}$ (see Section \ref{sec:invertiblemat}).  The set of 
components $\pi_0M_n(\bar{\R})$ can be 
identified with $M_n(\pi_0\bar{\R})$, 
and we let $GL_n(\bar{\R})$ be the submonoidal 
category of $M_n(\bar{\R})$ consisting of the components that are invertible
as matrices over the additive group completion of $\pi_0\bar{\R}$ (see
Definitions~\ref{def:wimatrices} and \ref{def:glnr}).  Since
the effect of stabilization 
in $H\bar{\R}$ is exactly group completion, the natural isomorphism 
$|\bar{\R}| \ra H\bar{\R}(S^0)$ together with stabilization induces a
natural map $|GL_n(\bar{\R})| \ra GL_n(H\bar{\R})$.

\begin{lem}\label{lem:glh}
If $\bar{\R}$ is a  ring category, \ie, a rig category with
$\pi_0 \bar{\R}$ a ring, then
  $$|GL_n(\bar{\R})| \we GL_n(H\bar{\R})$$
is a homotopy equivalence.
\end{lem}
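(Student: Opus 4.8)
The plan is to compare the two group-like monoids term by term through their defining pullbacks. Observe that $GL_n(\bar\R)$ also sits in a pullback square analogous to the one defining $GL_n(H\bar\R)$: the monoidal category of weakly invertible $n \times n$ matrices over $\bar\R$ is the pullback of the full matrix category $M_n(\bar\R)$ over the monoid $GL_n(\pi_0\bar\R) \subset M_n(\pi_0\bar\R) = \pi_0 M_n(\bar\R)$, since weak invertibility of a matrix over $\bar\R$ is detected on $\pi_0$. So the claim reduces, by the gluing lemma for homotopy pullbacks, to three comparisons: $|M_n(\bar\R)| \we \hocolim_{\bfm \in I} \Omega^m M_n(H\bar\R(S^m))$ on the upper right corner, the identity on the lower left corner $GL_n(\pi_0\bar\R) = GL_n(\pi_0 H\bar\R)$ (immediate since $H$ induces an isomorphism on $\pi_0$), and compatibility of the right-hand vertical maps.

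First I would reduce the matrix comparison to the $1 \times 1$ case componentwise. An $n \times n$ matrix over a symmetric monoidal category is, additively, an $n^2$-fold product, and both $M_n(-)$ and the loop/hocolim construction commute with finite products up to equivalence; so $|M_n(\bar\R)| \we \hocolim_{\bfm} \Omega^m M_n(H\bar\R(S^m))$ follows from the statement that $|\bar\R| \we \hocolim_{\bfm \in I} \Omega^m H\bar\R(S^m) = \Omega^\infty H\bar\R$ restricted to the relevant path components, together with the fact that matrix multiplication is built from $\oplus$ and $\otimes$ in $\bar\R$ and from the smash-product pairing on $H\bar\R$, which agree by construction of the Elmendorf--Mandell ring structure on $H\bar\R$. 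The equivalence $|\bar\R| \we \Omega^\infty H\bar\R$ onto the components hit is essentially the group-completion theorem: $H\bar\R$ is by definition the $K$-theory spectrum of the symmetric monoidal category $(\bar\R, \oplus)$, so $\Omega^\infty H\bar\R \simeq \Omega B(\coprod \cdots)$ is a group completion of $|\bar\R|$, and since $\pi_0\bar\R$ is already a ring — in particular group-complete under addition — no actual completion happens on the relevant summands and the map is an equivalence there.

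The one place where the hypothesis $\pi_0\bar\R$ \emph{a ring} is genuinely used, and where I expect the main subtlety, is exactly in this last point: one must know that $\pi_0 M_n(\bar\R)$, or rather the monoid of components relevant to $GL_n$, is unchanged by group completion, i.e. that $|M_n(\bar\R)| \to \Omega^\infty(\text{its }K\text{-theory})$ is a $\pi_0$-iso onto its image and a homology iso of the relevant components — and then to promote this to the pullback $GL_n$ using that homotopy pullbacks are preserved by the comparison maps. Concretely, after establishing the corner equivalences one invokes the gluing lemma: given a map of squares in which the upper-right, lower-left and lower-right maps are equivalences, the induced map on upper-left pullbacks is an equivalence, \emph{provided} the right-hand vertical maps are fibrations (or one works with homotopy pullbacks throughout), so a small amount of care replacing the relevant maps by fibrations is needed. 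Granting all this, $|GL_n(\bar\R)| \we GL_n(H\bar\R)$ follows, and since both sides are group-like topological monoids this weak equivalence is automatically a homotopy equivalence.
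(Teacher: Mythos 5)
Your proposal follows essentially the same route as the paper: reduce from $GL_n$ to $M_n$ using the pullback squares and the isomorphism $\pi_0\bar\R \cong \pi_0 H\bar\R$, reduce from $M_n$ to the $1\times1$ case via the $n^2$-fold product structure, and then establish $|\bar\R|\simeq \hocolim_{\bfm\in I}\Omega^m H\bar\R(S^m)$ using that $\pi_0\bar\R$ is a group. The one substantive difference is that for this last step you invoke the group completion theorem, which is a heavier tool than necessary and forces you into hedging about ``relevant path components'' and homology isomorphisms. The paper instead just observes directly that $H\bar\R(S^0)=|\bar\R|$, that the structure maps $\Omega^m H\bar\R(S^m)\to\Omega^{m'}H\bar\R(S^{m'})$ are already equivalences for injections of nonempty sets (a general property of the Segal/Elmendorf--Mandell construction), and that the remaining structure map $H\bar\R(S^0)\to\Omega H\bar\R(S^1)$ is an equivalence precisely because $\bar\R$ is group-like — no restriction to components is needed, and no appeal to the homological group-completion theorem. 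Similarly, the ``small amount of care'' about fibrancy in the gluing lemma dissolves because the bottom maps in the pullback squares are maps of discrete $\pi_0$-objects, so the squares are automatically homotopy cartesian; the paper handles this in one clause.
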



\begin{proof}
By assumption $\pi_0 \bar{\R}$
is isomorphic to its group completion $Gr(\pi_0 \bar{\R}) = \pi_0H \bar{\R}$,
so it is enough to show that $|M_n(\bar{\R})|$ and  $\hocolim_{\bfm \in I}
\,\Omega^mM_n(H\bar{\R}(S^m))$ are
equivalent.  Both are $n^2$-fold products, so it suffices to show that
$|\bar{\R}|$ and $\hocolim_{\bfm \in I} \Omega^mH\bar{\R}(S^m)$ are
equivalent.
All the structure maps
$\Omega^mH\bar{\R}(S^m)\to \Omega^{m'} H\bar{\R}(S^{m'})$ are
equivalences for  $\bfm
\to  \mathbf{m'}$ an
injection of nonempty finite sets, and since $\bar{\R}$ is already group-like
$|\bar{\R}| \simeq H\bar{\R}(S^0)$ maps by an equivalence to $\Omega
H\bar{\R}(S^1)$.
\end{proof}


We know from  \cite{BDRR1} that there is a chain of 
simplicial rig categories
$$
\begin{CD}
  \R@<{\sim}<<Z\R@>>>\bar\R
\end{CD}
$$
such that
\begin{itemize}
\item $\R\gets Z\R$ becomes a weak equivalence upon realization,
\item $HZ\R \to H\bar\R$ is a stable equivalence and
\item $\pi_0\bar\R$ is a ring.
\end{itemize}
Consider the commutative diagram
$$
\xymatrix{
{|BGL(\R)|} \ar[d] & {|BGL(Z\R)|}\ar[r]^{} \ar[l]_{\sim}\ar[d] &
{|BGL(\bar{\R})|}\ar[d]^{\sim} \\
{BGL(H\R)}& {BGL(HZ\R)} \ar[l]_{\sim} \ar[r]^{\sim}&{BGL(H\bar{\R})}
\rlap\,,
}$$
where the definition of $GL$ of rig categories is 
given in Section
\ref{sec:invertiblemat} and the bar construction is recalled in
Section
\ref{sec:barc}.  Both constructions preserve weak 
equivalences.
\begin{itemize}
\item The leftward 
pointing horizontal maps are weak equivalences since
$\R\gets Z\R$ is,
\item the bottom rightward 
pointing arrow is a weak equivalence since
$HZ\R\to H\bar\R$ is a stable equivalence and
\item the right hand 
vertical map is a weak equivalence by Lemma \ref{lem:glh}.
\end{itemize}

To show that the left hand 
vertical arrow is a weak equivalence, it
therefore suffices to prove that the upper right hand horizontal
map is a weak equivalence.
By Proposition \ref{prop:onesided is fiber}, the homotopy fiber of
$BGL(Z\R)\to BGL(\bar\R)$ is given by the one-sided bar construction
$B(*,GL(Z\R),GL(\bar\R))$, and so we have reduced the problem to
giving a contraction of the associated space.

Under the assumptions of Theorem \ref{thm:main} we know
from \cite{BDRR1} that there is a 
chain of weak equivalences
$$
\begin{CD}
  (-\R)\R@<{\sim}<<Z(-\R)\R@>{\sim}>>\bar\R
\end{CD}
$$
of $Z\R$-modules. 
Here, ($-\R)\R$ is the Grayson-Quillen model \cite{G}, also
discussed in Section \ref{sub:TM}, and so by Remark \ref{GLofmodules}
we get weak equivalences
$$
\begin{CD}
  B(*,GL(\R),GL((-\R)\R))@<{\sim}<< B(*,GL(Z\R),GL(Z(-\R)\R))
@>{\sim}>> B(*,GL(Z\R),GL(\bar\R)) \,.
\end{CD}
$$
 This means that
the somewhat complicated construction of $\bar\R$ from \cite{BDRR1}
may be safely forgotten once we know it exists.

Just one simplification remains: in Lemma \ref{lem:TM is gp compl} we
display  a weak equivalence $T\R\to (-\R)\R$ of $\R$-modules.
The $\R$-modules $T\R$ and $(-\R)\R$ are generalizations of how
to construct the  integers from the natural numbers by considering equivalence
classes of pairs of natural numbers.

We are left with showing that $B(*,GL(\R),GL(T\R))$ is contractible.
This is  done through a concrete contraction.  It is an elaboration of
the  following path
$$
\left[
  \begin{matrix}
    a-b&0\\0&1
  \end{matrix}
\right]
\to
\left[
  \begin{matrix}
    a-b&b\\0&1
  \end{matrix}
\right]
\gets
\left[
  \begin{matrix}
    1&0\\-1&1
  \end{matrix}
\right],\qquad a,b\in\mathbb N
$$
 in $B(*,GL_{2}(\mathbb N),GL_{2}(\mathbb Z))=\left\{[q]\mapsto
   GL_{2}(\mathbb N)^{\times q}\times GL_{2}(\mathbb Z)\right\}$, with $1$-simplices given by $$\left(\left[
  \begin{matrix}
    1&b\\0&1
  \end{matrix}
\right],\left[
  \begin{matrix}
    a-b&0\\0&1
  \end{matrix}
\right]\right)\text{ and }\left(\left[
  \begin{matrix}
    a&b\\1&1
  \end{matrix}
\right],\left[
  \begin{matrix}
    1&0\\-1&1
  \end{matrix}
\right]\right)\qquad \in GL_2(\mathbb N)\times GL_2(\mathbb Z)$$
respectively, showing 
that the inclusion of
$B(*,GL_{1}(\mathbb N),GL_{1}(\mathbb Z))$
in $B(*,GL_{2}(\mathbb N),GL_{2}(\mathbb Z))$ is null-homo\-topic.

\subsection{Plan} 
The structure of the paper is as follows.
In Section \ref{sec:invertiblemat} we discuss the monoidal category
$GL_n(\R)$ of (weakly) invertible matrices over a strictly bimonoidal category
$\R$.
Section \ref{sec:barc} recalls the definition of the bar construction of
monoidal categories as in \cite{BDR} and introduces a version with
coefficients in a module. In Section \ref{sec:contraction} we
construct the  promised contraction of $B(*,GL(\R),GL(T\R))$, thus
completing  the proof of the main theorem.

\bigskip

This paper has circulated in preprint form under the title
``Two-vector bundles define a form of elliptic cohomology'', which,
while not highlighting the nuts and bolts of the paper perhaps better
represented the reason for writing (or reading) it.  The old preprint
\cite{BDRR}
also contained the main result of the paper \cite{BDRR1}. 

Graeme Segal constructed a ring completion of the rig category of
complex vector spaces \cite[p.~300]{S:catscoh} (see also the Appendix of
\cite{S:equ}). His model is a topological category consisting of certain
spaces of bounded chain complexes and spaces of quasi-isomorphisms. One
can probably build a variant of his model that could 
replace the construction $\bar\R$ in
our proof of Theorem~\ref{thm:main}, in the special case $\R = \mathcal{V}$.

A piece of notation: if $\calc$ is any small category, then
the expression $X \in \calc$ is short for ``$X$ is an object of
$\calc$'' and likewise for morphisms and diagrams.  Displayed diagrams
commute unless the contrary is stated explicitly.
For basics on bipermutative and \bcs we refer to \cite{BDRR1} Section 2.

\section{Weakly invertible matrices} \label{sec:invertiblemat}

Let $\R$ be a \sbc.

\begin{defn}
The \emph{category of $n \times n$-matrices over $\R$}, $M_n(\R)$, is
defined
as follows. The objects of $M_n(\R)$ are matrices
$X = {(X_{i,j})}_{i,j=1}^n$ of objects of $\R$ and morphisms from $X =
{(X_{i,j})}_{i,j=1}^n$ to $Y = {(Y_{i,j})}_{i,j=1}^n$ are
matrices  $F = {(F_{i,j})}_{i,j=1}^n$ where each $F_{i,j}$ is a
morphism in $\R$ from $X_{i,j}$ to $Y_{i,j}$.
\end{defn}
\begin{lem} \label{lem:monoidalmat}
For a \sbc $(\R, \oplus, 0_\R, c_\oplus, \otimes, 1_\R)$  the category
$M_n(\R)$ is a monoidal
category  with respect to the matrix multiplication bifunctor
\begin{align*}
M_n(\R) \times M_n(\R) & \stackrel{\cdot}{\lra}  M_n(\R)\\
{(X_{i,j})}_{i,j=1}^n \cdot  {(Y_{i,j})}_{i,j=1}^n & =
{(Z_{i,j})}_{i,j=1}^n \quad \text{with} \quad Z_{i,j} = \bigoplus_{k=1}^n
X_{i,k} \otimes Y_{k,j} \,.
\end{align*}
The unit of this structure is given by the unit matrix object $E_n$ which has
$1_\R \in \R$ as diagonal entries and $0_\R \in \R$
in the other places.
\end{lem}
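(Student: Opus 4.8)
The plan is to verify directly that matrix multiplication, as defined, gives a functor that is associative and unital up to coherent natural isomorphism, with the natural isomorphisms built out of the coherence data of $\R$. First I would check that the assignment $((X_{i,j}),(Y_{i,j})) \mapsto (\bigoplus_{k=1}^n X_{i,k}\otimes Y_{k,j})$ is functorial in both variables: on morphisms it sends $(F_{i,j}), (G_{i,j})$ to the matrix whose $(i,j)$-entry is $\bigoplus_{k=1}^n F_{i,k}\otimes G_{k,j}$, and functoriality of this assignment is immediate from bifunctoriality of $\otimes$ and functoriality of the iterated $\oplus$ on $\R$. Here one uses that in a \sbc the iterated sum $\bigoplus_{k=1}^n$ is a chosen functor $\R^n \to \R$ (we take the left-normalized bracketing, say $((\cdots(X_1\oplus X_2)\oplus\cdots)\oplus X_n)$, with the convention that the empty sum is $0_\R$), so that $Z_{i,j}$ is well-defined on the nose.

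Next I would construct the associativity isomorphism. For three matrices $X,Y,W$, the $(i,j)$-entry of $(X\cdot Y)\cdot W$ is $\bigoplus_{\ell}\bigl(\bigoplus_k X_{i,k}\otimes Y_{k,\ell}\bigr)\otimes W_{\ell,j}$, while the $(i,j)$-entry of $X\cdot(Y\cdot W)$ is $\bigoplus_k X_{i,k}\otimes\bigl(\bigoplus_\ell Y_{k,\ell}\otimes W_{\ell,j}\bigr)$. A natural isomorphism between these is assembled from: the right distributivity isomorphism turning $(\bigoplus_k A_k)\otimes B$ into $\bigoplus_k(A_k\otimes B)$; the left distributivity isomorphism turning $A\otimes(\bigoplus_\ell B_\ell)$ into $\bigoplus_\ell(A\otimes B_\ell)$; the associativity $\alpha_\oplus$ and symmetry $c_\oplus$ of $\oplus$ used to reindex the resulting double sum $\bigoplus_{k,\ell}$ consistently; and the associativity $\alpha_\otimes$ of $\otimes$ to match $(X_{i,k}\otimes Y_{k,\ell})\otimes W_{\ell,j}$ with $X_{i,k}\otimes(Y_{k,\ell}\otimes W_{\ell,j})$. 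Naturality in all three matrix arguments follows from naturality of each of these constituent isomorphisms. Similarly, the unit isomorphisms $E_n\cdot X \cong X$ and $X\cdot E_n\cong X$ come from: the unit isomorphisms $1_\R\otimes(-) \cong (-)$ and $(-)\otimes 1_\R\cong(-)$; the fact that $0_\R\otimes(-)$ and $(-)\otimes 0_\R$ are naturally isomorphic to the constant functor at $0_\R$ (this is part of the \sbc structure, since $\R$ has chosen such "multiplicative zero" isomorphisms); and the unit isomorphism $0_\R\oplus(-)\cong(-)$ for $\oplus$, applied to collapse all the off-diagonal summands of $\bigoplus_{k} (E_n)_{i,k}\otimes X_{k,j}$ down to the single term $1_\R\otimes X_{i,j}$.

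The main obstacle is the pentagon and triangle coherence for $M_n(\R)$: one must check that the associativity isomorphism just built satisfies MacLane's pentagon, and that it is compatible with the unit isomorphisms via the triangle identity. I would handle this by a naturality-and-uniqueness argument rather than a brute diagram chase. Since every isomorphism in sight is constructed entirely from the coherence data of $\R$ (the associativity, symmetry and unit isomorphisms for $\oplus$, those for $\otimes$, the two distributivity isomorphisms, and the multiplicative-zero isomorphisms), and since $\R$ satisfies the rig coherence axioms by hypothesis, any two maps $((X\cdot Y)\cdot W)\cdot V \to X\cdot(Y\cdot(W\cdot V))$ built from these data and agreeing in the "shape" of sum-reindexing are equal by the coherence theorem for \sbcs (Laplaza's coherence for rig categories, recalled in \cite{BDRR1} Section 2). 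This reduces the pentagon and triangle for $M_n(\R)$ to the assertion that both composites around each diagram are "canonical" maps in the Laplaza sense with the same underlying bookkeeping of indices, which they visibly are. I would therefore state the coherence verification as following from Laplaza coherence applied entrywise, entry $(i,j)$ of each diagram being a diagram of canonical isomorphisms in $\R$ built from the distributivity and associativity data indexed by the relevant products of index sets, and leave the entrywise check to the reader, since it is exactly the content of rig coherence. Finally, I note that $M_n(\R)$ need not be symmetric monoidal — matrix multiplication is noncommutative — so only the monoidal (not symmetric monoidal) axioms are at stake.
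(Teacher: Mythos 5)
The paper states Lemma~\ref{lem:monoidalmat} without proof, so there is no paper argument to compare against; your proposal fills the gap, and the overall strategy is sound: build the associator and unitors from the coherence data of $\R$, then dispatch the pentagon and triangle by appealing to Laplaza-type coherence. The appeal is legitimate here because the entrywise pentagon diagrams involve only canonical isomorphisms between formal rig expressions in which each atom $X_{i,k}\otimes Y_{k,\ell}\otimes W_{\ell,m}\otimes V_{m,j}$ appears exactly once, so the regularity hypothesis needed for that coherence theorem is met.

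That said, you are not exploiting the hypothesis that $\R$ is \emph{strictly} bimonoidal, and this makes your proof considerably longer than necessary. In a strictly bimonoidal category $\oplus$ is permutative (strictly associative and unital), $\otimes$ is strict monoidal, the multiplicative zero is a strict equality, and one distributivity law (conventionally the right one) is an equality. Consequently the iterated sum $\bigoplus_{k=1}^n$ needs no choice of bracketing, and the unitors you construct are in fact identities: $\bigoplus_k (E_n)_{i,k}\otimes X_{k,j}=0_\R\oplus\cdots\oplus 1_\R\otimes X_{i,j}\oplus\cdots\oplus 0_\R=X_{i,j}$ on the nose, so $E_n\cdot X=X=X\cdot E_n$ and the triangle axiom is vacuous. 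The associator is genuinely non-trivial, but it involves only the non-strict left distributivity isomorphism and the $\oplus$-symmetry $c_\oplus$ used to reindex the double sum from $\bigoplus_\ell\bigoplus_k$ to $\bigoplus_k\bigoplus_\ell$; the constraints $\alpha_\oplus$ and $\alpha_\otimes$ you invoke are all identities, as is the multiplicative-zero isomorphism. The only coherence diagram that actually requires verification is the pentagon, and your argument for it is correct. Your more general proof is not wrong, but for the lemma as stated a substantially shorter argument, with a strict unit, is available.
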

The property of $\R$ being bimonoidal gives $\pi_0 \R$ the structure
of a  rig, and its (additive) group completion
$Gr(\pi_0 \R) = (-\pi_0\R)\pi_0\R$ is a ring.
\begin{defn} \label{def:wimatrices}
We define the \emph{weakly invertible $n \times n$-matrices over $\pi_0 \R$},
$GL_n(\pi_0 \R)$, to be the $n \times n$-matrices over $\pi_0 \R$
that are invertible as matrices over $Gr(\pi_0 \R)$.
\end{defn}
Note that we can define $GL_n(\pi_0 \R)$ by the pullback square
$$\xymatrix{
{GL_n(\pi_0\R)} \ar[r] \ar@{ >->}[d] & {GL_n(Gr(\pi_0\R))} \ar@{ >->}[d] \\
{M_n(\pi_0\R)} \ar[r] & {M_n(Gr(\pi_0\R))}
}$$
\begin{defn} \label{def:glnr}
The \emph{category of weakly invertible  $n \times n$-matrices over $\R$},
$GL_n(\R)$, is the full subcategory of $M_n(\R)$ with objects all
matrices $X = {(X_{i,j})}_{i,j=1}^n \in M_n(\R)$ whose matrix of
$\pi_0$-classes $[X] = {([X_{i,j}])}_{i,j=1}^n$ is contained in
$GL_n(\pi_0 \R)$.
\end{defn}
Matrix multiplication is of course compatible with the property of
being weakly invertible. Thus, the category $GL_n(\R)$ inherits a
monoidal structure from $M_n(\R)$.

However, even if our base category is not bimonoidal it still makes
sense to talk about matrices and even weakly invertible matrices, as long as
$\pi_0$ of that category is a rig.

  \begin{rem}\label{GLofmodules}If $\M$ is an $\R$-module, matrix
multiplication makes the category $M_n(\M)$ into a module over the
monoidal  category $GL_n(\R)$.
For our applications the following situation will be particularly
important:  let $\M\to\mathcal N$ be a map of $\R$-modules, where
the map of $\pi_0\R$-modules $\pi_0\M\to\pi_0\mathcal N$ comes from
a rig map under $\pi_0\R$.
Then we get a map $GL_n(\M)\to GL_n(\mathcal
N)$ of  $GL_n(\R)$-modules which induces a weak equivalence upon
realization  if $\M\to\mathcal N$ does.
  \end{rem}

There is a canonical stabilization functor $GL_n(\R) \ra GL_{n+1}(\R)$
which is induced by taking the block sum with $E_1 \in GL_1(\R)$. Let
$GL(\R)$ be the sequential colimit of the categories $GL_n(\R)$.

\section{The one-sided bar construction} \label{sec:barc}
In this section we review some well-known facts about the one-sided
bar  construction of monoidal categories.
\begin{defn}
  Let $(\mathcal M, \cdot, 1)$ be a monoidal category and
$\mathcal T$ a left $\mathcal
  M$-module.  The \emph{one-sided bar construction} $B(*,\mathcal M,\mathcal
  T)$ is the simplicial category whose 
  category of $q$-simplices $B_q(*,\mathcal
  M,\mathcal T)$ is as follows: 
consider the ordered set
  $[q]_+=[q]\sqcup \{\infty\}$, \ie, in addition to the numbers
$0<1<\dots<q$
there is a greatest element $\infty$.  An object  $a$ in
$B_q(*,\mathcal
  M,\mathcal T)$ consists of the following data.
\begin{enumerate}
  \item For each $0\leq i<j\leq q$
  there is an object $a_{ij}\in\mathcal M$, and for each $0\leq i\leq
  q$ an object $a_{i\infty}\in\mathcal T$.
\item For each $0\leq i<j<k\leq\infty$ there is an isomorphism
$$a_{ijk}\colon a_{ij}\cdot a_{jk}\to a_{ik}$$
(in $\mathcal M$ if $k<\infty$ and in $\mathcal T$ if $k=\infty$)
such that if  $0\leq i<j<k<l\leq\infty$, the following diagram commutes
$$\xymatrix{
{(a_{ij}\cdot a_{jk})\cdot a_{kl}} \ar[d]_{a_{ijk}\cdot
    \id}\ar[rr]^{\text{struct.~iso.}} & &
{a_{ij}\cdot (a_{jk}\cdot a_{kl})} \ar[d]^{\id\cdot a_{jkl}}\\
{a_{ik}\cdot a_{kl}} \ar[r]^{a_{ikl}} & {a_{il}}& {a_{ij}\cdot
a_{jl} \rlap\,.} \ar[l]_{a_{ijl}}
}
$$
  \end{enumerate}
A morphism $f\colon a\to b$ consists of morphisms $f_{ij}\colon
a_{ij}\to b_{ij}$ (in $\mathcal M$ if $j<\infty$ and in $\mathcal T$
if $j=\infty$) such that if $0\leq i<j<k\leq\infty$
$$f_{ik}a_{ijk}=b_{ijk}(f_{ij}\cdot f_{jk})\colon a_{ij}\cdot
a_{jk}\to b_{ik} \,.$$

The simplicial structure is gotten as follows: if $\phi\colon
[q]\to[p]\in\Delta$ the functor
$\phi^*\colon B_p(*,\mathcal M,\mathcal T)\to
B_q(*,\mathcal M,\mathcal T)$ is obtained by precomposing with
$\phi_+=\phi\sqcup\{\infty\}$. So for instance $d_1(a)$ is gotten by
deleting all entries with indices containing
$1$ from the data giving $a$. In order to allow for degeneracy maps
$s_i$, we use the convention that  all objects of the form
$a_{ii}$ are the unit of the monoidal structure, and all isomorphisms
of the form $a_{iik}$ and $a_{ikk}$ are identities.
\end{defn}

\begin{rem} 
A good way to think about this comes from the discrete case 
when $\mathcal M$ is a monoid and $\mathcal T$ is 
an $\mathcal M$-set.  Then an
object  $a\in B_q(*,\mathcal M,\mathcal T)$ is uniquely given by the
``superdiagonal'' 
$(a_{01},a_{12},\dots, a_{q-1\, q},a_{q\infty})$, and
$B(*,\mathcal M,\mathcal T)$ is isomorphic to the nerve of the category with
objects  $\mathcal T$ and
morphisms $a_{1\infty}\to a_{01}\cdot a_{1\infty}=a_{0\infty}$
corresponding to $(a_{01},a_{1\infty})$.

The reason we have to include all of 
the ``upper triangular'' 
elements is that
associativity may not be strict. For instance, it is not strict in our
main example: matrix multiplication over a rig category is in general
not strictly associative.  Hence, $a_{01}\cdot (a_{12} \cdot a_{23})$
may be different  from $(a_{01} \cdot a_{12}) \cdot a_{23}$, and so
the superdiagonal 
elements do not carry 
enough information to turn the obvious choice of face maps into a
simplicial structure.  We remedy this by adding choices for all faces
in our simplices.  This just adds more elements in each isomorphism
class in every simplicial degree, and is a standard trick used in many
places, for instance by Waldhausen in his $S_\bullet$-construction.  
\end{rem}

\begin{ex}

  \begin{enumerate}
  \item If $\mathcal T$ is the one-point category $*$, then $B(*, \mathcal M,
    *)$ is isomorphic to the bar construction $B\mathcal M$ of \cite{BDR}.
  \item If $F\colon\mathcal M\to\mathcal M'$ is a lax monoidal functor, then
    $\mathcal M'$ may be considered as an $\mathcal M$-module, and we
    write without further ado $B(*,\mathcal M,\mathcal M')$ for the
    corresponding bar construction (with $F$ suppressed).  In case $F$
    is an isomorphism, $B(*,\mathcal M,\mathcal M')$ is contractible.
  \end{enumerate}

\end{ex}
We think of elements of $B_q(*,\mathcal M,\mathcal T)$ in terms of
strictly upper triangular 
arrays of objects, suppressing the isomorphisms, so that a
typical element in $B_2(*,\mathcal M,\mathcal T)$ is written
$$
\begin{matrix}
  a_{01}&a_{02}&a_{0\infty}\\
        &a_{12}&a_{1\infty}\\
        &      &a_{2\infty}
\end{matrix}
$$
with $d_1$ given by
$$
\begin{matrix}
  a_{02}&a_{0\infty}\\
        &a_{2\infty} \rlap\,.
\end{matrix}
$$
The one-sided bar construction is functorial in ``natural modules''.
A natural module is a pair $(\mathcal M,\mathcal T)$ where $\mathcal
M$ is a monoidal category and $\mathcal T$ is an $\mathcal M$-module.
A morphism $(\mathcal M,\mathcal T)\to(\mathcal M',\mathcal T')$
consists of a pair $(F,G)$ where $F\colon \mathcal M\to\mathcal M'$ is
a lax monoidal functor and $G\colon\mathcal T\to F^*\mathcal T'$ is a
map of $\mathcal M$-modules, where $F^*\mathcal T'$ is $\mathcal T'$
endowed with the $\mathcal M$-module structure given by restricting
along $F$.

\begin{lem}\label{lem:diagmatters}
  For each $q$ there is an equivalence of categories between
  $B_q(*,\mathcal M,\mathcal T)$ and the product category $\mathcal
  M^{\times q}\times\mathcal T$.
\end{lem}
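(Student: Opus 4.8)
The plan is to exhibit an explicit functor
$$
P_q\colon B_q(*,\mathcal M,\mathcal T)\lra \mathcal M^{\times q}\times\mathcal T
$$
by sending an object $a$ to its ``diagonal'' $(a_{01},a_{12},\dots,a_{q-1\,q},a_{q\infty})$ and a morphism $f$ to $(f_{01},\dots,f_{q-1\,q},f_{q\infty})$; functoriality is immediate since the coherence conditions on morphisms in the bar construction are trivially satisfied by the diagonal components. The claim is that $P_q$ is an equivalence of categories. To see essential surjectivity, given a tuple $(m_{01},\dots,m_{q-1\,q},t)$ one builds an object of $B_q(*,\mathcal M,\mathcal T)$ by setting $a_{i\,i+1}=m_{i\,i+1}$, $a_{q\infty}=t$, and then defining all the longer entries $a_{ij}$ (for $j>i+1$) to be the left-normalized composites $a_{ij}:=a_{i\,i+1}\cdot\bigl(a_{i+1\,i+2}\cdot(\cdots)\bigr)$ — some fixed choice of parenthesization — with the structure isomorphisms $a_{ijk}$ supplied by the coherence isomorphisms of the monoidal structure (associativity constraints), using that these are coherent so the pentagon-type diagram in the definition commutes. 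This object visibly maps to the given tuple, so $P_q$ is surjective on objects (in fact one gets a section up to natural isomorphism).

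For fully faithfulness, I would argue that a morphism $f\colon a\to b$ in $B_q(*,\mathcal M,\mathcal T)$ is completely determined by its diagonal components $f_{i\,i+1}$ and $f_{q\infty}$: the relation $f_{ik}a_{ijk}=b_{ijk}(f_{ij}\cdot f_{jk})$ lets one solve inductively for every $f_{ij}$ with $j-i\geq 2$ in terms of shorter ones, since $a_{ijk}$ and $b_{ijk}$ are isomorphisms; this gives injectivity of $P_q$ on hom-sets. Conversely, given arbitrary diagonal morphisms $g_i\colon a_{i\,i+1}\to b_{i\,i+1}$ and $g_\infty\colon a_{q\infty}\to b_{q\infty}$, one defines the longer $f_{ij}$ by the same forced formula and must check that the full coherence condition holds for all triples $i<j<k$, not just the consecutive ones used to define things; this reduces, via the pentagon-type diagram relating the $a_{ijk}$ and $b_{ijk}$ and naturality of the monoidal associativity isomorphisms, to the consecutive cases, which hold by construction. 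Finally one observes $P_q$ is compatible enough with the two monoidal/module structures that it is the claimed equivalence.

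The main obstacle I expect is the bookkeeping in the fully-faithful step: verifying that the inductively defined $f_{ij}$ really satisfy the compatibility squares for \emph{all} triples and not merely consecutive ones. This is a diagram chase using the coherence pentagon in the definition of objects of $B_q(*,\mathcal M,\mathcal T)$ together with naturality of the structural isomorphism ``struct.~iso.'' of $\mathcal M$ (and of $\mathcal T$ at the last spot), and it is the place where one genuinely uses that all the $a_{ijk},b_{ijk}$ are isomorphisms and that the monoidal coherence is available. Everything else — functoriality of $P_q$, essential surjectivity, injectivity on morphisms — is formal. I would present the argument for general $q$ with the inductive normalization, remarking that for $q\leq 1$ it is a tautology and that the degeneracy conventions ($a_{ii}=1$, $a_{iik}=a_{ikk}=\id$) are consistent with the normalized choices, so that the equivalences $P_q$ are natural in the simplicial variable up to coherent isomorphism.
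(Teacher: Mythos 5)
Your proposal is correct and follows essentially the same route as the paper: the diagonal forgetful functor $P_q$ (the paper's $F$) as the equivalence, with a quasi-inverse built by choosing a normalized parenthesization for the longer entries $a_{ij}$ and letting the coherence isomorphisms of $\mathcal M$ (and the module structure of $\mathcal T$) supply the $a_{ijk}$. The paper simply exhibits the inverse functor and leaves the verification implicit, whereas you unpack it into essential surjectivity plus a fully-faithfulness argument that reconstructs all $f_{ij}$ from the diagonal ones via the invertibility of the $a_{ijk}$; this is the same underlying idea, just written out in the other standard form of the equivalence criterion.
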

\begin{proof}
  The equivalence is given by the forgetful functor
$$F\colon B_q(*,\mathcal M,\mathcal T)\to\mathcal M^{\times
  q}\times\mathcal T$$
sending $a$ to the ``superdiagonal''
$F(a)=(a_{01},\dots,a_{q-1\,q},a_{q\infty})$.   The
inverse is gotten by sending $(a_1,\dots,a_q,a_\infty)$ to the $a$
with $a_{ij}=a_{i+1}\cdot(\cdots (a_{j-1}\cdot a_j)\cdots)$ and
$a_{ijk}$ given by the structural isomorphisms.
\end{proof}
\begin{cor}\label{cor:Bpreserves equivalences}
  Let $(F,G)\colon(\mathcal M,\mathcal T)\to(\mathcal M',\mathcal T')$
  be a map of natural modules such that $F$ and $G$ are equivalences
  of categories.  Then the induced map
$$B(*,F,G)\colon B(*,\mathcal M,\mathcal T)\to B(*,\mathcal
M',\mathcal T')$$
is a degreewise equivalence of simplicial categories.
\end{cor}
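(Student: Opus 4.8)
The plan is to reduce the statement to Lemma \ref{lem:diagmatters} by a degreewise argument. For each simplicial degree $q$, we have a commutative square of categories
$$
\xymatrix{
{B_q(*,\mathcal M,\mathcal T)} \ar[r]^{B_q(*,F,G)} \ar[d]_{F_q} &
{B_q(*,\mathcal M',\mathcal T')} \ar[d]^{F_q'} \\
{\mathcal M^{\times q}\times\mathcal T} \ar[r]^{F^{\times q}\times G} &
{\mathcal M'^{\times q}\times\mathcal T'}
}
$$
where the vertical arrows are the forgetful functors sending an object $a$ to its diagonal $(a_{01},\dots,a_{q-1\,q},a_{q\infty})$, and the bottom horizontal arrow is the product of $F$ (applied $q$ times, on the $\mathcal M$-factors) and $G$ (on the $\mathcal T$-factor). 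First I would check that this square actually commutes on the nose: chasing an object $a$ through $F_q'\circ B_q(*,F,G)$ gives $(Fa_{01},\dots,Fa_{q-1\,q},Ga_{q\infty})$, and through $(F^{\times q}\times G)\circ F_q$ gives the same tuple (using that $G$ agrees with $F$ on the module structure in the last slot), and similarly on morphisms — this is immediate from the definitions of the functors involved.

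With the square in hand, the argument is a two-out-of-three for equivalences of categories. By Lemma \ref{lem:diagmatters}, both $F_q$ and $F_q'$ are equivalences of categories. By hypothesis $F$ and $G$ are equivalences, hence so is the product $F^{\times q}\times G$ (a finite product of equivalences is an equivalence). Therefore the composite $F_q'\circ B_q(*,F,G) = (F^{\times q}\times G)\circ F_q$ is an equivalence, and since $F_q$ is an equivalence (so in particular part of an equivalence, and we may cancel it on the right), $F_q'\circ B_q(*,F,G)$ being an equivalence forces $B_q(*,F,G)$ to be an equivalence — more precisely, $B_q(*,F,G)$ becomes, after composing on the left with the equivalence $F_q'$ and using an inverse equivalence, isomorphic to $(F^{\times q}\times G)\circ F_q$ up to natural isomorphism, which is an equivalence; then two-out-of-three for equivalences (using that isomorphisms of functors and quasi-inverses are available) yields that $B_q(*,F,G)$ is an equivalence of categories. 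This holds for every $q$, which is exactly the assertion that $B(*,F,G)$ is a degreewise equivalence of simplicial categories.

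The one point that genuinely needs care — the ``main obstacle,'' modest as it is — is verifying strict commutativity of the square rather than mere commutativity up to natural isomorphism. The inverse functor in Lemma \ref{lem:diagmatters} builds $a_{ij}$ by a specific right-nested bracketing $a_{i+1}\cdot(\cdots(a_{j-1}\cdot a_j)\cdots)$, and one must confirm that $F$ being lax monoidal (so equipped with coherence maps $Fx\cdot Fy\to F(x\cdot y)$) does not obstruct commutativity on the nose; in fact we only compare the forgetful (diagonal-extracting) functors, which read off the $a_{i\,i+1}$ and $a_{q\infty}$ entries, and these are visibly respected by $B_q(*,F,G)$, so strict commutativity holds and no coherence subtlety intrudes. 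If one preferred to sidestep even this, one could instead argue that $B_q(*,F,G)$ is, up to the natural isomorphisms supplied by the equivalences $F_q$, $F_q'$, identified with $F^{\times q}\times G$, and invoke two-out-of-three; either way the conclusion is the same and the proof is short.
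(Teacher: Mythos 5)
Your proof is correct and follows exactly the route the paper intends: Corollary \ref{cor:Bpreserves equivalences} is stated immediately after Lemma \ref{lem:diagmatters} as a direct consequence, and the degreewise commutative square with the diagonal-extracting forgetful functors, followed by two-out-of-three for equivalences of categories, is the expected (unwritten) argument. Your remark about strict commutativity of the square is a fair point of care and is handled correctly, since only the diagonal entries are read off and the lax monoidal coherence of $F$ plays no role there.
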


\begin{rem}\label{rem:Bpreserves we}
  The same result holds, if instead of equivalences of categories we
  consider  weak equivalences.
\end{rem}

Usually $\mathcal M^{\times q}\times\mathcal T$ is not functorial in
$[q]$, but if $(\mathcal M,\mathcal T)$ is strict, the monoidal
structure gives a simplicial category
$$B^{\text{strict}}(*,\mathcal M,\mathcal T)=
\{[q]\mapsto \mathcal M^{\times q}\times\mathcal T\} \,.$$
In this situation Lemma \ref{lem:diagmatters} reads:
\begin{cor}\label{cor:Binstrictcase}
  Let $\mathcal M$ be a strict monoidal category and $\mathcal T$ a
  strict $\mathcal M$-module.  Then there is a degreewise equivalence
  between the simplicial categories
  $B(*,\mathcal M,\mathcal T)$ and $B^{\text{strict}}(*,\mathcal M,\mathcal T)$.
\end{cor}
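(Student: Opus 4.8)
The plan is to deduce Corollary~\ref{cor:Binstrictcase} from Lemma~\ref{lem:diagmatters} by checking that in the strict case the equivalences of categories constructed there assemble into a map of simplicial categories. First I would observe that for a strict monoidal category $\mathcal M$ and a strict $\mathcal M$-module $\mathcal T$, the assignment $[q]\mapsto \mathcal M^{\times q}\times\mathcal T$ really does define a simplicial category $B^{\text{strict}}(*,\mathcal M,\mathcal T)$: the face and degeneracy maps use the (now strictly associative and unital) multiplication of $\mathcal M$ and its action on $\mathcal T$ to compose or insert identity entries, exactly as in the classical bar construction of a monoid, and the simplicial identities hold on the nose because there are no coherence isomorphisms to track.

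Next I would compare the two simplicial objects. The forgetful functor $F\colon B_q(*,\mathcal M,\mathcal T)\to\mathcal M^{\times q}\times\mathcal T$ from the proof of Lemma~\ref{lem:diagmatters} is already an equivalence of categories for each $q$; what remains is to verify that these $F$ are compatible with the simplicial structure maps, i.e.\ that for $\phi\colon[q]\to[p]$ in $\Delta$ the square relating $\phi^*$ on $B_\bullet(*,\mathcal M,\mathcal T)$ and the corresponding structure map on $B^{\text{strict}}_\bullet(*,\mathcal M,\mathcal T)$ commutes. In the general (non-strict) case these squares commute only up to natural isomorphism, which is why $\mathcal M^{\times q}\times\mathcal T$ fails to be simplicial; in the strict case the relevant natural isomorphisms are identities (they are built out of the associativity and unit constraints, which are trivial), so the squares commute strictly. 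Hence $F$ is a map of simplicial categories which is a degreewise equivalence, and this is exactly the asserted degreewise equivalence between $B(*,\mathcal M,\mathcal T)$ and $B^{\text{strict}}(*,\mathcal M,\mathcal T)$.

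The main obstacle — really the only point requiring care — is the bookkeeping that shows the comparison is compatible with \emph{all} face maps, not just the outer ones. The inner face map $d_i$ on $B_q(*,\mathcal M,\mathcal T)$ composes $a_{i-1\,i}$ with $a_{i\,i+1}$ via the isomorphism $a_{i-1\,i\,i+1}$, while on $B^{\text{strict}}$ it simply multiplies the two entries; one must check that under $F$ and its inverse these agree, which comes down to the inverse functor of Lemma~\ref{lem:diagmatters} reconstructing $a_{ij}$ as the (strictly well-defined, since associativity is strict) iterated product $a_{i+1}\cdots a_j$ with all structural isomorphisms equal to identities. Once this is spelled out the outer faces $d_0$, $d_q$ and the degeneracies are handled the same way, using the convention that $a_{ii}$ is the unit and $a_{iik}$, $a_{ikk}$ are identities, and the result follows. (Alternatively one can phrase the whole comparison as a levelwise-equivalence of simplicial categories and invoke Remark~\ref{rem:Bpreserves we} if one only wants the weaker conclusion after realization, but the strict statement as given needs the direct check above.)
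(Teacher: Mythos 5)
There is a genuine gap here: the forgetful functor $F$ is \emph{not} a map of simplicial categories, even when $\mathcal M$ and $\mathcal T$ are strict. Your justification that ``the relevant natural isomorphisms are identities (they are built out of the associativity and unit constraints, which are trivial)'' conflates two different things. The isomorphisms $a_{ijk}\colon a_{ij}\cdot a_{jk}\to a_{ik}$ are part of the \emph{data} of an object $a\in B_q(*,\mathcal M,\mathcal T)$; they are not derived from the coherence constraints of $\mathcal M$ and need not be identities even when $\mathcal M$ is strict. Concretely, $F(d_1 a)$ has first coordinate $a_{02}$, while $d_1^{\mathrm{strict}}F(a)$ has first coordinate $a_{01}\cdot a_{12}$, and these are only related by the non-identity isomorphism $a_{012}$ (take $\mathcal M$ strict monoidal with two isomorphic but distinct objects $a\cong b$, $a\cdot a=a$, and set $a_{01}=a_{12}=a$, $a_{02}=b$, $a_{012}=\phi$: then $F d_1 a=(b)\ne(a)=d_1^{\mathrm{strict}} F a$). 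So the square does not commute, strictly or otherwise; what fails is not compatibility with the coherence of $\mathcal M$ but with the free choice of $a_{ijk}$ allowed in $B_q$.

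The fix is to run the argument in the other direction: the inverse functor $G$ from the proof of Lemma~\ref{lem:diagmatters}, sending $(a_1,\dots,a_q,a_\infty)$ to the triangular array with $a_{ij}=a_{i+1}\cdots a_j$ and all $a_{ijk}$ equal to identities, \emph{is} a map of simplicial categories when $(\mathcal M,\mathcal T)$ is strict. Here strictness is exactly what makes the iterated products $a_{i+1}\cdots a_j$ unambiguous and hence makes $G$ commute on the nose with all face and degeneracy maps of $B^{\mathrm{strict}}$ and $B$, and Lemma~\ref{lem:diagmatters} already tells you $G$ is an equivalence of categories in each degree. So $G\colon B^{\mathrm{strict}}(*,\mathcal M,\mathcal T)\to B(*,\mathcal M,\mathcal T)$ is the asserted degreewise equivalence. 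Your third paragraph gestures toward this (``the inverse functor\dots with all structural isomorphisms equal to identities''), but it is invoked only to try to rescue $F$, which does not work since $GF\ne\mathrm{id}$; the clean statement is that $G$ itself is the simplicial equivalence.
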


\begin{prop}\label{prop:onesided is fiber}
  Let $F\colon\mathcal M\to\mathcal G$ be a strong monoidal functor
  such that the monoidal structure on $\mathcal G$ induces a group
  structure on $\pi_0\mathcal G$.  Then
$$
\begin{CD}
  B(*,\mathcal M,\mathcal G)@>>>B\mathcal M\\
@VVV@VVV\\
B(*,\mathcal G,\mathcal G)@>>>B\mathcal G
\end{CD}
$$
is homotopy cartesian, meaning that it induces a homotopy cartesian
diagram upon applying the nerve functor in every degree.
The (nerve of the) lower left hand corner is contractible.
\end{prop}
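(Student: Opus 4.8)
The plan is to verify the two assertions separately, handling the homotopy cartesian square first and the contractibility of $B(*,\mathcal G,\mathcal G)$ second. For the square, the strategy is to reduce everything to the discrete situation of simplicial sets by applying the nerve functor degreewise, so that $B(*,\mathcal M,\mathcal G)$, $B\mathcal M$, $B(*,\mathcal G,\mathcal G)$ and $B\mathcal G$ become bisimplicial sets, and then to realize and work with the resulting simplicial spaces. First I would observe, using Lemma~\ref{lem:diagmatters} and its strict reformulation (Corollaries~\ref{cor:Binstrictcase} and \ref{cor:Bpreserves equivalences}), that in each simplicial degree $q$ the square of categories is, up to equivalence,
$$
\begin{CD}
\mathcal M^{\times q}\times\mathcal G @>>> \mathcal M^{\times q}\\
@VVV @VVV\\
\mathcal G^{\times q}\times\mathcal G @>>> \mathcal G^{\times q}\rlap{\,,}
\end{CD}
$$
where the horizontal maps are projections away from the last coordinate and the vertical maps apply $F$. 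Degreewise, the nerve of this square is a pullback of simplicial sets, and the right-hand vertical map's homotopy fiber is $B\mathcal G$ (a copy of it, since $\pi_0\mathcal G$ is a group so $B\mathcal G$ is a delooping and $\mathcal G^{\times q} \to *$ has fiber $\Omega B\mathcal G \simeq \mathcal G$ in the group-complete sense). So the point is to pass from this degreewise fibration statement to a fibration statement after realization.

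The key step, and the one I expect to be the main obstacle, is establishing that the map of simplicial spaces
$$
[q]\mapsto B(*,\mathcal M,\mathcal G)_q \lra [q]\mapsto B\mathcal M_q
$$
has homotopy fiber (over the realization of $[q]\mapsto B\mathcal G_q = *$ in the appropriate sense) computed degreewise, i.e.\ that there are no $\lim^1$ or Bousfield--Kan type obstructions when we pass to geometric realization. The standard tool here is the realization lemma (or the Bousfield--Friedlander/Puppe theorem on realizing fibrations of simplicial spaces): a degreewise (homotopy) fibration sequence of simplicial spaces realizes to a fibration sequence provided a $\pi_*$-Kan (group-completion / $\pi_0$-stability) condition is satisfied. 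Here that condition holds precisely because $\pi_0\mathcal G$ is a group: the base simplicial space $[q]\mapsto B\mathcal G_q$ has the homotopy type of $B(B\mathcal G) = B^2\mathcal G$, which is simply connected, and more to the point the relevant $\pi_0$-action condition is automatic. I would cite the version of this result already used elsewhere in the \cite{BDRR1} circle of ideas, or Segal's realization results, rather than reproving it. Once this is in place, the homotopy fiber of $B(*,\mathcal M,\mathcal G)\to B\mathcal M$ is the degreewise homotopy fiber, which by the displayed square above is $B\mathcal G$, and likewise the homotopy fiber of $B(*,\mathcal G,\mathcal G)\to B\mathcal G$ is $B\mathcal G$; since the induced map on fibers is the identity, the square is homotopy cartesian.

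For the final sentence, the nerve of $B(*,\mathcal G,\mathcal G)$ is contractible: this is exactly Example~(2) following the definition of the one-sided bar construction, since $\mathcal G \to \mathcal G$ is the identity functor — which is an isomorphism — so $B(*,\mathcal G,\mathcal G)$ is contractible. Alternatively, and more robustly, the simplicial category $B(*,\mathcal G,\mathcal G)$ admits an extra degeneracy: the identity object of $\mathcal G$ (together with the structural isomorphisms) gives a contracting homotopy of the one-sided bar construction $B(*,\mathcal G,-)$ applied to $\mathcal G$ itself regarded as the free module on one generator, so after taking nerves one obtains a simplicial homotopy contracting $|B(*,\mathcal G,\mathcal G)|$ to a point. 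Either way this finishes the proof.
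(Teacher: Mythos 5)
Your overall plan — reduce to the strict case via Corollaries~\ref{cor:Bpreserves equivalences} and~\ref{cor:Binstrictcase}, exhibit the square degreewise as $\mathcal M^{\times q}\times\mathcal G\to\mathcal M^{\times q}$ over $\mathcal G^{\times q}\times\mathcal G\to\mathcal G^{\times q}$, and then invoke a realization theorem — is in the same spirit as the paper's proof, but the paper's route is tighter: after strictification it does a ``reversal of priorities'', noting that $NB^{\text{strict}}(*,\mathbf{st}\mathcal M,\mathbf{st}\mathcal G)$ is canonically isomorphic to the one-sided bar construction $B(*,N\mathbf{st}\mathcal M,N\mathbf{st}\mathcal G)$ of the simplicial monoid $N\mathbf{st}\mathcal M$ acting on the group-like simplicial monoid $N\mathbf{st}\mathcal G$, so that the fiber sequence $B(*,N\mathbf{st}\mathcal M,N\mathbf{st}\mathcal G)\to B(N\mathbf{st}\mathcal M)\to B(N\mathbf{st}\mathcal G)$ follows directly from May's classifying space theory \cite{May1}. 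That packaging makes manifest that the group-like hypothesis is being applied to $\mathcal G$ rather than $\mathcal M$, which is where your write-up wobbles.

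Concretely, there are two muddled points in your argument that you would need to repair. First, you say the ``right-hand vertical map's homotopy fiber is $B\mathcal G$''; the right-hand vertical map is $\mathcal M^{\times q}\to\mathcal G^{\times q}$, whose homotopy fiber depends on $F$ and is certainly not $B\mathcal G$. What you presumably want is the horizontal homotopy fiber, which is the constant $\mathcal G$, and then the realization theorem should be applied so that the group-like condition is required of $\mathcal G$ (the fiber/acting monoid), not of the base $B\mathcal M$ — because $\pi_0\mathcal M$ need not be a group, a naive application of the Bousfield--Friedlander $\pi_*$-Kan condition to the base $NB\mathcal M$ would fail. Second, the parenthetical ``the base simplicial space $[q]\mapsto B\mathcal G_q$ has the homotopy type of $B(B\mathcal G)=B^2\mathcal G$, which is simply connected'' is incorrect: the bisimplicial realization is $B|N\mathcal G|$, with $\pi_1\cong\pi_0\mathcal G$, which is a group but generally nontrivial. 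Neither of these slips is fatal to the underlying strategy, but as written the justification that the degreewise fiber survives realization rests on the wrong condition applied to the wrong object, which is exactly the point the paper's ``reversal of priorities'' reduction to \cite{May1} is designed to avoid. Your treatment of the contractibility of $B(*,\mathcal G,\mathcal G)$ via the identity functor or via an extra degeneracy is correct and matches the paper.
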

\begin{proof}
  By \cite{JoyalStreet} there is a diagram of monoidal categories
$$
\begin{CD}
  \mathbf{st}\mathcal M@>\sim>>\mathcal M\\
@V{\mathbf{st}F}VV@V{F}VV\\
  \mathbf{st}\mathcal G@>\sim>>\mathcal G
\end{CD}
$$
such that the horizontal maps are monoidal equivalences, and
$\mathbf{st}F$ is a strict monoidal functor between strict monoidal
categories.  Together with Corollaries \ref{cor:Bpreserves equivalences}
and \ref{cor:Binstrictcase}
this tells us that we may just as well consider the strict situation,
and use the strict bar construction.  However, note that the nerve of
the strict monoidal category $\mathbf{st}\mathcal M$ is a simplicial
monoid, and that reversal of priorities gives a natural isomorphism
$$B(*,N\mathbf{st}\mathcal M,N\mathbf{st}\mathcal G)\cong
NB^{\text{strict}}(*,\mathbf{st}\mathcal M,\mathbf{st}\mathcal G),$$
so that our statement reduces to the statement that
$$B(*,N\mathbf{st}\mathcal M,N\mathbf{st}\mathcal G)\to
B(N\mathbf{st}\mathcal M)\to B(N\mathbf{st}\mathcal G)$$
is a fiber sequence up to homotopy, which is a classical result
\cite{May1}
given that $N\mathbf{st}\mathcal G$ is group-like.
\end{proof}

\section{Contracting the one-sided bar construction} \label{sec:contraction}

\subsection{A model for $K$-theory of $\R$ as an $\R$-module}
\label{sub:TM}

In order to construct concrete homotopies, we offer a slight variant
of the Grayson--Quillen model where morphisms are not entire equivalence
classes.  The price is as usual that the resulting object is a
$2$-category. 
Since there was some confusion about this point 
while the paper was still at a preprint stage, we emphasize
that this is not the construction of Thomason \cite[4.3.2]{Th1} and
Jardine \cite{J}.

\begin{defn}
Let $(\M, \oplus,0_\M, \tau_\M)$ be a permutative category written
additively.  Let
$T\M$ be the
following $2$-category.  The objects of $T\M$ are pairs $(A^+,A^-)=:A$ of
objects  in $\M$, thought of as plus and minus objects in $\M$.  Given
two  objects $A,B \in T\M$, the category of morphisms $T\M(A,B)$ has
objects the pairs  $(X,\alpha)$ where $X$ is an object in $\M$ and $\alpha$
is a  pair of morphisms $\alpha^\pm \colon
A^\pm \oplus X \to B^\pm$ in $\M$.
A morphism from
$(X,\alpha)$ to $(Y,\beta)$ is an isomorphism $\phi\colon X \to Y$ such that 
$\beta^\pm(1 \oplus \phi)=\alpha^\pm$. 
Composition $T\M(B,C)\times T\M(A,B)\to
T\M(A,C)$ is  given by sending  $((Y,\beta), (X,\alpha))$ to
the pair
consisting  of $X\oplus Y$ and the composite maps
$$
\begin{CD}
  A^\pm \oplus (X \oplus Y) =
(A^\pm \oplus X) \oplus Y @>{\alpha^\pm \oplus \id}>>
B^\pm \oplus Y@>\beta^\pm>>C^\pm \,.
\end{CD}
$$
Composition on
morphisms  is simply given by addition. Composition is strictly
associative because $\M$ is permutative; if $\M$ is merely symmetric
monoidal,  standard modifications are necessary.  Symmetry
allows for a symmetric monoidal structure on
$T\M$: if we define $(A^+,A^-) \oplus (B^+,B^-) := (A^+ \oplus B^+,A^-
\oplus B^-)$, we need the symmetry in order to turn that prescription
into a bifunctor.
\end{defn}

The Grayson-Quillen model for the $K$-theory of $\M$ is the category
$(-\M)\M$ with the same objects as $T\M$ and with morphism sets the
path components $(-\M)\M(A,B)=\pi_0T\M(A,B)$.
If all morphisms in $\M$ are isomorphisms and if additive
translation in $\M$ is faithful,  $(-\M)\M$ is shown in \cite{G} to be
a group completion of $\M$.  Under these hypotheses, there is at most
one morphism between two given objects $(X,\alpha)$ and  $(Y,\beta)$
in $T\M(A,B)$.  Consequently the morphism spaces are homotopy
discrete: the projection $T\M(A,B)\to \pi_0T\M(A,B)$ is a weak
equivalence.  In the case of a topological category,
we interpret $\pi_0$ as the coequalizer of the source and target maps
from the morphism space to the object space.
The assignment that is the
identity  on objects and otherwise is induced by the projection
$T\M(A,B)\to \pi_0T\M(A,B)$ gives a $2$-functor $T\M\to (-\M)\M$
(considering $(-\M)\M$ as a $2$-category with only identity
$2$-morphism).

\begin{lem}\label{lem:TM is gp compl}
  Let $\M$ be a permutative category with all morphisms in $\M$
isomorphisms and faithful additive
translation.  The $2$-functor $T\M \to
(-\M)\M$ is a weak  equivalence and the standard inclusion $\M \to T\M$ is a
group  completion.
\end{lem}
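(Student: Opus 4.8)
The plan is to prove the two assertions separately, since they concern different maps. For the first assertion, that $T\M \to (-\M)\M$ is a weak equivalence, I would argue degreewise after taking nerves: a $2$-functor between $2$-categories becomes a weak equivalence of spaces as soon as it induces a weak equivalence on each hom-category (and is the identity, hence bijective, on objects). This is exactly the situation here. The hom-category $T\M(A,B)$ maps to the discrete category $\pi_0 T\M(A,B) = (-\M)\M(A,B)$, and the hypotheses on $\M$ — all morphisms isomorphisms, $X\oplus(-)$ faithful for every $X$ — are precisely what forces $T\M(A,B)$ to have at most one morphism between any two objects. A groupoid with at most one morphism between objects is equivalent to the discrete set of its components, so $T\M(A,B)\to\pi_0 T\M(A,B)$ is an equivalence of categories, hence a weak equivalence on nerves. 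I would make explicit the one-morphism claim: given $(X,\alpha)$ and $(Y,\beta)$ and two morphisms $\phi,\phi'\colon X\to Y$ with $\beta(1\oplus\phi)=\alpha=\beta(1\oplus\phi')$, invertibility of $\beta^\pm$ gives $1\oplus\phi = 1\oplus\phi'$ and faithfulness of $A^\pm\oplus(-)$ then gives $\phi=\phi'$. One must also note the topological bookkeeping: when $\M$ is a topological category we read $\pi_0$ as the coequalizer of source and target, and "homotopy discrete hom-spaces" replaces "discrete," but the argument is the same.

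For the second assertion, that $\M\to T\M$ is a group completion, I would combine the first assertion with the cited result of Grayson. Grayson \cite{G} proves, under exactly these hypotheses on $\M$, that the inclusion $\M\to(-\M)\M$ is a group completion of $\M$, i.e. that $|(-\M)\M|$ computes the group completion of the monoid $|\M|$ with respect to $\oplus$. Since $T\M\to(-\M)\M$ is a weak equivalence of symmetric monoidal categories (the monoidal structures on $T\M$ and $(-\M)\M$ are compatible along the $2$-functor, being defined coordinatewise), the composite $\M\to T\M\to(-\M)\M$ is a group completion and the second map is an equivalence, so $\M\to T\M$ is itself a group completion. One should check that the $2$-functor $T\M\to(-\M)\M$ is monoidal, but this is immediate from the definitions since both $\oplus$'s act by $(A^+,A^-)\oplus(B^+,B^-)=(A^+\oplus B^+,A^-\oplus B^-)$ on objects and by addition of the $X$-parts on morphisms, and the projection to $\pi_0$ respects addition.

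The main obstacle I anticipate is not conceptual but bookkeeping in the topological/$2$-categorical setting: one must be careful that "weak equivalence of $2$-categories" is interpreted via the appropriate realization (e.g. the diagonal of the bisimplicial set obtained by taking nerves in both simplicial directions), and that the statement "equivalence on hom-categories plus bijection on objects implies weak equivalence" is invoked in the correct form — this is standard but deserves a line. The other mild subtlety is ensuring that faithfulness of additive translation survives in the relevant variance: we need $A^+\oplus(-)$ and $A^-\oplus(-)$ both faithful, which follows from faithfulness of $X\oplus(-)$ for all objects $X$ as assumed. Beyond these points the proof is essentially a concatenation of the uniqueness-of-morphisms observation with Grayson's theorem.
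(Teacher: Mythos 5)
Your proposal is correct and follows essentially the same route as the paper. The paper places its argument in the paragraph immediately preceding the lemma statement (rather than in a separate proof environment): it invokes Grayson \cite{G} for the group completion property of $(-\M)\M$, observes that the hypotheses force at most one morphism between any two objects of $T\M(A,B)$ so that $T\M(A,B) \to \pi_0 T\M(A,B)$ is a weak equivalence, and concludes that the identity-on-objects $2$-functor $T\M \to (-\M)\M$ is a weak equivalence; your write-up is a slightly more explicit unpacking of exactly these steps, including the cancellation argument for uniqueness of morphisms and the monoidal-compatibility bookkeeping that the paper leaves tacit.
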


Note that if $\R$ is a \bc, $T\R$ will not be a \bc
(essentially because of the non-strict symmetry
in  quadratic terms, as in \cite[p.~572]{Th2}),
but it will still be an $\R$-module:
\begin{lem} \label{lem:R-module}
  Let $(\R, \oplus, 0_\R, c_\oplus, \otimes, 1_\R)$ be a
strictly bimonoidal category.  The map
$$\R\times T\R\to T\R$$
given on objects by $(A,(B^+,B^-))\mapsto(A \otimes B^+,A \otimes
B^-)$, and
on  morphisms by sending $\phi\colon A \to B \in \R$ and $(X,\alpha)\in
T\R(C,D)$  to the pair consisting of $A \otimes X$ and the map
$$
\begin{CD}
  A\otimes C^\pm \oplus A \otimes X @>{}>>
A \otimes (C^\pm \oplus X)@>{\phi\otimes \alpha^\pm}>>B\otimes D^\pm
\end{CD}
$$
(where the first map is the left distributivity isomorphism) induces an
$\R$-module structure on $T\R$.
\end{lem}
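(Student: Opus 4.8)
The plan is to verify directly that the formulas given in the statement assemble into a functor $\R \times T\R \to T\R$ and that this functor satisfies the associativity and unit coherences of an $\R$-module action. First I would check that the assignment is well-defined on morphism categories: given $\phi\colon A\to B$ in $\R$ and a morphism $(X,\alpha)\to(Y,\beta)$ in $T\R(C,D)$, i.e.\ an isomorphism $\psi\colon X\to Y$ with $\beta\circ(1\oplus\psi)=\alpha$, I must produce an isomorphism $A\otimes X\to A\otimes Y$ (namely $\phi\otimes\psi$, or rather $1_{A}\otimes\psi$ followed by appropriate identifications) compatible with the new structure maps; the required identity $(\phi\otimes\beta^{\pm})\circ(1\oplus(1_A\otimes\psi))$-type equation follows from functoriality of $\otimes$ together with naturality of the left distributivity isomorphism $A\otimes(C^{\pm}\oplus X)\xrightarrow{\sim}A\otimes C^{\pm}\oplus A\otimes X$. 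I would also note that composition of morphisms in the mapping categories of $T\R$ is addition in $\R$, and $\otimes$ distributes over this, so the construction respects composition.

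Next I would check functoriality in the $\R$-variable and compatibility with composition of $1$-morphisms in $T\R$: composing $(X,\alpha)\in T\R(C,D)$ with $(W,\gamma)\in T\R(D,E)$ gives the pair $(X\oplus W,\,\delta)$ with $\delta^{\pm}$ the evident composite through $D^{\pm}$, and acting by $A\in\R$ should give the same thing as first acting by $A$ on each factor and then composing in $T\R$; this is precisely the coherence between left distributivity and associativity of $\oplus$ in a strictly bimonoidal category, i.e.\ the pentagon-type diagram relating $A\otimes((C^{\pm}\oplus X)\oplus W)$ to its reassociations. Since $\R$ is strictly bimonoidal the multiplicative associativity and left distributivity are strict, so these diagrams collapse and the check is essentially bookkeeping. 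Similarly $1_{\R}\otimes(-)$ is the identity on the nose because $1_{\R}$ is a strict multiplicative unit, giving the unit axiom for the module structure, and the action of a tensor product $A\otimes A'$ agrees with the composite actions by $A'$ then $A$ by strict multiplicative associativity together with the iterated left distributivity coherence.

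The one genuinely delicate point — and the step I expect to be the main obstacle — is the interaction with the \emph{symmetry} $c_\oplus$. Recall from the preceding discussion that $T\R$ fails to be a \bc precisely because the symmetry is non-strict on quadratic terms, and the symmetric monoidal structure $(A^+,A^-)\oplus(B^+,B^-)=(A^+\oplus B^+,A^-\oplus B^-)$ on $T\R$ uses $c_\oplus$ to make $\oplus$ into a bifunctor on mapping categories. For the $\R$-action to be a genuine action of the monoidal (indeed symmetric monoidal) structure one must check that acting by $A\in\R$ commutes with the symmetry isomorphisms of $T\R$ up to the structure isomorphisms, which amounts to the hexagon/coherence axiom relating $A\otimes(-)$ to $c_\oplus$ — this is exactly right distributivity composed with the symmetry, and in a strictly bimonoidal category right distributivity is \emph{not} strict, so this diagram does not collapse and must be checked using the coherence axioms spelled out in \cite{BDRR1} Section 2. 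I would handle it by writing out the relevant pentagon-and-hexagon diagram and invoking the bimonoidal coherence theorem to conclude it commutes. Once this is done, the remaining axioms are formal, and the lemma follows.
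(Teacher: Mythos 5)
The paper states Lemma~\ref{lem:R-module} without proof, so there is no argument of the authors' to compare against; the immediately following text simply begins using the $\R$-module structure, so the verification is treated as routine. Your first two paragraphs lay out exactly the checks one must do: well-definedness on $2$-morphisms via naturality of left distributivity and functoriality of $\otimes$; compatibility with the horizontal (1-morphism) composition in $T\R$ via coherence of $\oplus$-associativity with left distributivity; and unitality and associativity of the $\otimes$-action using the strictness assumptions. That decomposition is correct and those checks go through.

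Your third paragraph misdiagnoses both the mechanism and the relevant axiom. The reason $T\R$ escapes Thomason's obstruction while still carrying an $\R$-module structure is that the action $A\cdot(B^+,B^-)=(A\otimes B^+,A\otimes B^-)$ never mixes the plus- and minus-slots: there is no term of the form $A^-\otimes B^-$ and hence no quadratic reshuffling whose coherence would require a symmetry that is not there. It is precisely the two-sided ``phony multiplication'' $(A^+B^+\oplus A^-B^-,\,A^+B^-\oplus A^-B^+)$ on $(-\R)\R$ that the remark preceding the lemma and \cite[p.~572]{Th2} warn about, and your proposed module action is one-sided, so that obstruction simply does not arise. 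Moreover, ``right distributivity composed with the symmetry'' is not what is in play: the action only ever invokes \emph{left} distributivity $A\otimes(B\oplus C)\cong A\otimes B\oplus A\otimes C$, exactly as the lemma states, because one never distributes a sum in $\R$ over an object of $T\R$; your claim also seems to conflate the additive symmetry $c_\oplus$ with the multiplicative one, since it is $c_\otimes$, not $c_\oplus$, that relates the two distributivity laws in bipermutative conventions. What \emph{does} require a check involving $c_\oplus$ — if, as the matrix construction downstream requires, the action is to be compatible with the permutative structure $\oplus$ on $T\R$ — is the interaction of $A\otimes(-)$ with the reordering $c_\oplus$ built into $\oplus$ on the mapping categories of $T\R$; and this is handled by the single axiom relating left distributivity to $c_\oplus$, with right distributivity never entering. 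So your instinct that $c_\oplus$ must be confronted somewhere is sound, but the ``genuinely delicate'' obstacle you flag is illusory, and the diagram you propose to draw is not the right one.
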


We consider  $T\R$ as a simplicial
category by taking the nerve of each category of morphisms; thus
in simplicial degree $\ell$, the objects of
$T_\ell\R$ are the objects of $T\R$. The morphisms in $T_\ell\R$ from
$(A^+,A^-)$
to $(B^+,B^-)$ consist of objects $X^0,\ldots,X^\ell$, a $1$-morphism 
$\alpha^\pm \colon A^\pm \oplus X^0 \rightarrow B^\pm$, and isomorphisms
$\phi^r\colon X^r \rightarrow X^{r-1}$ for $r=1, \ldots, \ell$.
The simplicial structure is
given by composing and forgetting $\phi^r$'s and inserting identity
maps.

\subsection{Subdivisions}
We will use the following variant of edgewise subdivision to make
room for an explicit simplicial contraction, whose construction
begins in Subsection \ref{subsec:start}.
Consider the shear functor $z\colon \Delta\times\Delta\to
\Delta\times\Delta$  given by sending $(S,T)$ to $(T\sqcup S,T)$ where
$T\sqcup S$ is the disjoint union with the ordering obtained from $T$
and $S$  with the extra declaration that every object in $S$ is
greater than  every object in $T$.  If $B$ is a bisimplicial object, we
let  $z^*B=B \circ z$.  The standard inclusion $S\to T\sqcup S$ gives a
natural  transformation $\eta$ in $\Delta\times \Delta$ from the
identity to $z$, and hence a natural  transformation in bisimplicial sets
$\eta^*\colon z^*\to \id$. Let $\mathrm{Ens}$ denote the category of sets
and functions.

\begin{lem}
  For any bisimplicial set $X$ the map $\eta^* \colon z^*X\to X$
  becomes a weak equivalence upon realization.
\end{lem}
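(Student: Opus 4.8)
The claim is that for any bisimplicial set $X$, the shear map $\eta^* \colon z^*X \to X$ is a weak equivalence after realization, where $z(S,T) = (T \sqcup S, T)$ and $\eta$ is induced by the inclusion $S \hookrightarrow T \sqcup S$. The natural strategy is to factor the shear functor $z$ through the diagonal so as to compare $|z^*X|$ with the ordinary diagonal realization. First I would observe that it suffices to check the statement on representable bisimplicial sets, or more precisely to argue by a standard ``realization preserves weak equivalences / Bousfield--Kan'' type reduction: both $X \mapsto |z^*X|$ and $X \mapsto |X|$ commute with colimits (geometric realization of bisimplicial sets is a left adjoint, and $z^*$ is given by precomposition hence commutes with colimits), and every bisimplicial set is a colimit of bisimplicial simplices $\Delta^{p,q} = \Delta^p \times \Delta^q$. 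Since a map of bisimplicial sets that is a levelwise weak equivalence (here, a levelwise \emph{iso} on representables suitably glued) realizes to a weak equivalence, it is enough to treat $X = \Delta^{p,q}$.

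For $X = \Delta^{p,q}$, the bisimplicial set $z^*X$ has $(S,T)$-simplices $\mathrm{Hom}_\Delta(T \sqcup S, [p]) \times \mathrm{Hom}_\Delta(T,[q])$. I would identify this explicitly: a monotone map $T \sqcup S \to [p]$ is the same as a pair of monotone maps $T \to [p]$, $S \to [p]$ whose images are suitably ordered (everything hit by $T$ is $\le$ everything hit by $S$), which one recognizes as the simplices of the décalage or join-type construction. The cleanest route is to note that $z$ is precisely (a reindexing of) the functor underlying Segal's edgewise-subdivision-with-a-cone, so that $|z^*X|$ for $X$ bisimplicial computes the homotopy colimit over $\Delta^{op} \times \Delta^{op}$ of $(S,T) \mapsto X_{T \sqcup S, T}$, and the natural transformation $\eta$ exhibits the $S$-direction as contractible: for \emph{fixed} $T$, the simplicial set $S \mapsto X_{T \sqcup S, T}$ receives $\eta^*$ from $S \mapsto X_{S,T}$ and in fact $S \mapsto X_{T\sqcup S,T}$ has an extra degeneracy / is the décalage of $X_{-,T}$ shifted by $T$, hence realizes (in the $S$-direction) to something equivalent to $X_{T,T}$-ish data — more honestly, one shows fiberwise over the $T$-direction that $|S \mapsto z^*X_{S,T}| \to |S \mapsto X_{S,T}|$ is an equivalence.

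Concretely I would run the argument in two steps. \textbf{Step 1:} Realize first in the $S$-variable. Fix $T = [q]$. The simplicial set $S \mapsto X_{T \sqcup S,\, T}$ admits an extra degeneracy (adjoin the element $T$'s top to the bottom of $S$, using that $T \sqcup S$ already contains $T$), so it is simplicially homotopy equivalent to the constant simplicial set on $X_{T,T}$. Meanwhile $\eta^* \colon X_{T\sqcup S, T} \to X_{S,T}$ in the $S$-direction, after realization, is compatible with the augmentations, and one checks it induces a weak equivalence onto $|S \mapsto X_{S,T}|$ — here is where the structure of $X = \Delta^{p,q}$ (or more generally the fact that we only need representables) makes the bookkeeping finite and explicit. \textbf{Step 2:} Now realize in the $T$-variable. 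We have a map of simplicial spaces $T \mapsto |S \mapsto z^*X_{S,T}| \to |S \mapsto X_{S,T}|$ which is a levelwise weak equivalence by Step 1; since realization of simplicial spaces preserves levelwise weak equivalences (the simplicial spaces in play are good / proper because the bisimplicial sets are, being built from representables), it is a weak equivalence after realizing the $T$-direction. Finally, for a bisimplicial set, $|X| \simeq |T \mapsto |S \mapsto X_{S,T}||$ (Fubini for geometric realization), and the same for $z^*X$, so we conclude $|\eta^*| \colon |z^*X| \to |X|$ is a weak equivalence, and the general case follows by the colimit reduction above.

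\textbf{The main obstacle.} The delicate point is Step 1: verifying that $\eta^*$, not merely the augmentation, is an equivalence in the $S$-direction. The extra degeneracy shows $S \mapsto z^*X_{S,T}$ is contractible onto $X_{T,T}$, but one must match this with $|S \mapsto X_{S,T}|$ compatibly via $\eta^*$; this is exactly the statement that décalage (shift the simplicial direction and forget the new bottom) is homotopy equivalent to evaluation, applied relative to the parameter $T$. I expect this to be bookkeeping rather than a genuine difficulty once one unwinds that $z$ restricted to representables is the join/décalage functor, but it is the step where all the ordering conventions in the definition of $T \sqcup S$ have to be used correctly.
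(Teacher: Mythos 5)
Your Step 1 is false, and it is not a bookkeeping issue — it is the precise point that the paper's proof is designed to circumvent. You claim that for fixed $T$ the map $\{S\mapsto X(T\sqcup S,T)\}\to\{S\mapsto X(S,T)\}$ becomes a weak equivalence upon realization, but it does not. Your own extra-degeneracy observation shows why: $S\mapsto X(T\sqcup S,T)$ contracts to the discrete set $X(T,T)$, whereas $|S\mapsto X(S,T)|$ is the realization $|X(-,T)|$ of the $T$-th row, which is not discrete in general. Concretely, take $X(S,T)=Y(S)$ for any simplicial set $Y$ that is not a disjoint union of points, say $Y=\Delta[1]$, and fix $T=[0]$; then the left side realizes to the two-point set $Y_0$ while the right side realizes to $|Y|\simeq *$, so the map is not even a bijection on $\pi_0$. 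The same failure occurs already for $X=\Delta^{p,q}$ with $p\geq 1$: the left side realizes to the discrete set $\Delta([T],[p])\times\Delta([T],[q])$ and the right side to $\Delta^p\times\Delta([T],[q])$, with strictly fewer components. So the ``realize in $S$ first, then realize in $T$'' strategy on $z^*X$ cannot work, because $\eta^*$ is simply not a vertical levelwise weak equivalence, and there is no gluing argument that can rescue a false base case.

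The paper's proof takes a genuinely different (and necessary) route. It never tries to realize $z^*X$ variable by variable. Instead it observes that the \emph{diagonal} of $z^*X$ — i.e.\ $R\mapsto X(R\sqcup R,R)$ — coincides with the diagonal of a different bisimplicial set, namely $(S,T)\mapsto X(S\sqcup S,T)$, which is edgewise subdivision applied to the first variable only. This second bisimplicial set (unlike $z^*X$) \emph{is} levelwise weakly equivalent to $X$: for each fixed $T$ the map $\{S\mapsto X(S\sqcup S,T)\}\to\{S\mapsto X(S,T)\}$ is the standard edgewise-subdivision comparison for the simplicial set $X(-,T)$, which by \cite{BHM} is a homotopy equivalence after realization. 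Since a map of bisimplicial sets that is a weak equivalence in each vertical degree induces a weak equivalence of diagonals, the lemma follows. The key move you are missing is the switch from $X(T\sqcup S,T)$ to $X(S\sqcup S,T)$ — legitimate only at the level of diagonals — and without it the ``realize one variable at a time'' Fubini argument collapses. Your reduction to representables at the outset is also not justified as stated (a natural transformation of colimit-preserving functors that is an equivalence on representables need not be one in general; compare $|{-}|$ with the constant functor $*$), though a careful skeletal induction could repair that part; Step 1, however, cannot be repaired.
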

\begin{proof}
  The diagonal of $z^*X$ is equal to the 
evaluation of $X$ on the opposite of the composite
$$
\begin{CD}
  \Delta@>{S\mapsto
(S,S)}>>\Delta \times\Delta @>{(S,T)\mapsto
    (S\sqcup S,T)}>>\Delta\times\Delta \,, 
\end{CD}
$$
so since a map of bisimplicial sets is an equivalence if it is one in
every (vertical) degree, it is enough to know that for each fixed $T\in
\Delta$ the natural map $\{S\mapsto X(S\sqcup S,T)\}\to \{S\mapsto
X(S,T)\}$ is a weak equivalence.  But this is a standard weak
equivalence from the (second) edgewise subdivision, which is known to
be homotopic to a homeomorphism after realization.
See \cite[Lemma 1.1]{BHM} and the proof of \cite[Proposition 2.5]{BHM}.
\end{proof}

Vertices in $z^*(\Delta[p]\times\Delta[q])$ (where products of
simplicial sets are viewed as bisimplicial sets,
and vertices are $(0,0)$-simplices) are for instance indexed
by tuples  $((a, b), c)$ where $0\leq a \leq b \leq p$ and $0\leq c\leq q$.

Here are pictures of $z^*(\Delta[2]\times\Delta[0])$ and
$z^*(\Delta[2]\times \Delta[1])$:
$$\xymatrix{
((2,2),0)\ar[r]\ar[dr]&((1,2),0)\ar[d]\ar[r]&((1,1),0)\ar[d]\\
&((0,2),0)\ar[r]\ar[dr]&((0,1),0)\ar[d]\\
&&((0,0),0)}$$

$$\xymatrix{
((2,2),1)\ar[rr]\ar[dd]\ar[drrr]\ar[ddrr]\ar@/^1.0pc/[dddrrr] &&
((1,2),1)\ar[dd]|(0.29)\hole|(0.70)\hole \ar[dr]\ar[rr]\ar[dddr]|(0.23)\hole &&
((1,1),1)\ar[dd]|(0.5)\hole|(0.66)\hole \ar[dr]
	\ar[dddr]|(0.33)\hole|(0.5)\hole \\
&&& ((0,2),1)\ar[drrr]\ar[rr]\ar[dd] &&
((0,1),1)\ar[dr]\ar[dd]|(0.33)\hole \\
((2,2),0)\ar[rr]\ar[drrr] &&
((1,2),0)\ar[dr]\ar[rr]|(0.21)\hole|(0.33)\hole|(0.5)\hole &&
((1,1),0)\ar[dr] &&
((0,0),1) \ar[dd] \\
&&& ((0,2),0)\ar[drrr]\ar[rr]
&& ((0,1),0)\ar[dr] \\
&&&&&& ((0,0),0) \,.}$$


\noindent
Note that for any bisimplicial set $X$
$$ X_{(p,q)} \cong \text{Hom}_{\text{bisimp. sets}}
(\Delta[p]\times\Delta[q],X)
= \int_{([s],[t])}
\mathrm{Ens}(\Delta([s],[p])\times\Delta([t],[q]),X_{(s,t)})$$
as a categorical end.
Therefore,
the right adjoint of $z^*$, $z_*$, is given by
$$ (z_*X)_{(p,q)} = \text{Hom}_{\text{bisimp. sets}}
(\Delta[p]\times\Delta[q],z_*X) \cong  \text{Hom}_{\text{bisimp. sets}}
(z^*(\Delta[p]\times\Delta[q]),X)$$
and thus
\begin{align*}
  z_*X&=\{[p],[q]\mapsto \{\text{bisimp. maps }
z^*(\Delta[p]\times\Delta[q])\to X\}\}\\
&= \{[p],[q]\mapsto \int_{([s],[t])}
\mathrm{Ens}(\Delta([t]\sqcup[s],[p])\times\Delta([t],[q]),X_{(s,t)})\} \,.
\end{align*}

Let $\eta_*\colon X\to z_*X$ be the natural transformation associated
with $\eta$.
Notice that $\eta_*$ maps $X_{(0,q)}$ isomorphically to
$(z_*X)_{(0,q)}$ for all $q\geq0$, so $(z_*X)_{(0,q)} \cong X_{(0,q)}$.

\begin{lem} \label{lem:split-mono}
In the homotopy category (with respect to maps that
become weak equivalences upon realization), 
$\eta_*\colon X\to z_*X$ is a split monomorphism.
\end{lem}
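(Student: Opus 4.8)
The plan is to exhibit an explicit retraction $r\colon z_*X\to X$ with $r\circ\eta_*=\id$. Recall from the computation just above that $(z_*X)_{(p,q)}$ is the set of bisimplicial maps $z^*(\Delta[p]\times\Delta[q])\to X$, and that $\eta_*$ sends a $(p,q)$-simplex $x\in X_{(p,q)}$ to the composite $z^*(\Delta[p]\times\Delta[q])\xrightarrow{\eta^*}\Delta[p]\times\Delta[q]\xrightarrow{x}X$. So I need a natural way to extract from a map $f\colon z^*(\Delta[p]\times\Delta[q])\to X$ a $(p,q)$-simplex of $X$, in a manner that recovers $x$ when $f$ came from $x$. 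The key observation is that $z^*(\Delta[p]\times\Delta[q])$ contains a distinguished copy of $\Delta[p]\times\Delta[q]$: using the indexing of vertices by triples $((a,b),c)$ with $0\le a\le b\le p$ and $0\le c\le q$, the ``diagonal'' sub-bisimplicial set spanned by the vertices $((a,a),c)$ is isomorphic to $\Delta[p]\times\Delta[q]$, and on this sub-object the map $\eta^*$ restricts to the identity of $\Delta[p]\times\Delta[q]$ (since $\eta$ is the standard inclusion $S\hookrightarrow T\sqcup S$, its effect on the $(a,a)$-type vertices is the identity). Concretely, the inclusion $\Delta[p]\times\Delta[q]\hookrightarrow z^*(\Delta[p]\times\Delta[q])$ is induced by the diagonal natural transformation $(S,T)\mapsto(S\sqcup S,T)$ composed appropriately; this is exactly the map whose realization the first lemma above identifies with the edgewise-subdivision equivalence.

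First I would make precise the ``diagonal inclusion'' $\delta_{p,q}\colon\Delta[p]\times\Delta[q]\to z^*(\Delta[p]\times\Delta[q])$ and check that it is natural in $([p],[q])$ and that $\eta^*\circ\delta_{p,q}=\id_{\Delta[p]\times\Delta[q]}$. Then I would define $r\colon z_*X\to X$ by sending $f\in(z_*X)_{(p,q)}$, viewed as a bisimplicial map $z^*(\Delta[p]\times\Delta[q])\to X$, to $f\circ\delta_{p,q}\in X_{(p,q)}$; naturality of $\delta$ makes $r$ a map of bisimplicial sets. Finally, for $x\in X_{(p,q)}$ we have $r(\eta_*x)=(\,x\circ\eta^*\,)\circ\delta_{p,q}=x\circ(\eta^*\circ\delta_{p,q})=x$, so $r\circ\eta_*=\id$ and $\eta_*$ is a split monomorphism on the nose, hence a fortiori in any localization, in particular in the homotopy category with respect to diagonal weak equivalences.

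The main obstacle is purely bookkeeping: correctly describing $\delta_{p,q}$ as a map of \emph{bisimplicial} sets (not just on vertices) and verifying its naturality in the pair $([p],[q])$, given that the shear functor $z$ mixes the two simplicial directions. The clean way to handle this is to work with the representing functor: the assignment $([s],[t])\mapsto\Delta([t]\sqcup[s],[p])\times\Delta([t],[q])$ receives a natural map from $([s],[t])\mapsto\Delta([s],[p])\times\Delta([t],[q])$ induced by the inclusion $[s]\to[t]\sqcup[s]$, which by Yoneda is precisely a bisimplicial map $\Delta[p]\times\Delta[q]\to z^*(\Delta[p]\times\Delta[q])$; one then checks this is a section of $\eta^*$ directly from the definition of $\eta$ as the standard inclusion, and naturality in $([p],[q])$ is immediate since everything in sight is functorial. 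Once this is in place the splitting is formal. (One may note in passing that $\delta_{p,q}$ realizes to the subdivision equivalence of the preceding lemma, so $r$ is in fact a weak equivalence on diagonals, but only the one-sided statement is needed here.)
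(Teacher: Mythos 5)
Your plan — exhibit an on-the-nose bisimplicial retraction $r$ of $\eta_*$ by restricting a map $f\colon z^*(\Delta[p]\times\Delta[q])\to X$ along a ``diagonal inclusion'' $\delta_{p,q}\colon\Delta[p]\times\Delta[q]\to z^*(\Delta[p]\times\Delta[q])$ — breaks down at the central step: $\delta_{p,q}$ does not exist as a natural bisimplicial map. You say the Yoneda-level construction is ``induced by the inclusion $[s]\to[t]\sqcup[s]$'', but precomposition with $[s]\hookrightarrow[t]\sqcup[s]$ gives a map $\Delta([t]\sqcup[s],[p])\to\Delta([s],[p])$, i.e.\ a map $z^*(\Delta[p]\times\Delta[q])\to\Delta[p]\times\Delta[q]$ --- this is $\eta^*$ itself, going the \emph{wrong} way. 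To get a section you would need, naturally in both $([s],[t])$ and $([p],[q])$, an order-preserving extension of each $f\colon[s]\to[p]$ to $\hat f\colon[t]\sqcup[s]\to[p]$. No such natural extension exists: sending all of $[t]$ to $f(0)$ fails naturality in $([s],[t])$, while sending all of $[t]$ to $0\in[p]$ fails naturality in $[p]$ (e.g.\ against $d^0\colon[p]\to[p+1]$, which does not fix $0$). Equivalently, there is no natural transformation $z\to\id$ in $\Delta\times\Delta$ split by $\eta$. The geometric claim is also wrong on its face: in the paper's picture of $z^*(\Delta[2]\times\Delta[0])$ the diagonal vertices $((0,0),0)$, $((1,1),0)$, $((2,2),0)$ are the three corners of a subdivided triangle and are not joined by any $1$-simplices, so they do not span a copy of $\Delta[2]$. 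This is the familiar fact that a subdivision map has no natural simplicial section; the splitting really is only homotopical.

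The paper's proof sidesteps this by using the adjunction $z^*\dashv z_*$. One checks formally that the counit $z^*z_*X\to X$ composed with $z^*\eta_*\colon z^*X\to z^*z_*X$ equals $\eta^*\colon z^*X\to X$, which is a diagonal weak equivalence by the preceding lemma. Hence $z^*\eta_*$ is a split mono in the homotopy category, and conjugating by the natural weak equivalence $\eta^*$ (applied to $X$ and to $z_*X$) transports this to $\eta_*$ itself. If you want to salvage your approach, you would have to replace the nonexistent strict retraction by a retraction defined only in the homotopy category, at which point you are essentially rederiving the paper's argument.
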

\begin{proof}
  By formal considerations the diagram
$$\xymatrix{
  z^*X\ar[r]^{z^*\eta_*}
\ar[dr]_{\eta^*} & z^*z_*X\ar[d]^{\text{counit}}\\
  &X
}
$$
commutes, and $\eta^*$ is a weak equivalence after realization. 
Hence
$z^*\eta_*$ (and so $\eta_*$) is a split monomorphism in the homotopy category.
\end{proof}

\subsection{The bar construction on matrices} \label{subsec:onesided}

Let $\R$ be a \sbc such that all morphisms are isomorphisms
and each translation functor is faithful.

Consider the one-sided bar construction $B(*,GL_n(\R),GL_n(T\R))$.
Viewing $T\R$ as a simplicial category we get that
$B(*,GL_n(\R),GL_n(T\R))$  is a bisimplicial
category. We are going to show that
$$B(*,GL(\R),GL(T\R))\cong\colim\limits_{n} B(*,GL_n(\R),GL_n(T\R))$$ is
contractible, and  it is enough to show that $B(*,GL(\R),GL(T_\ell\R))$
is contractible for each $\ell$. 

To ease readability, we will abandon the cumbersome $\oplus$ and
$\otimes$ in favor of the more readable $+$ and $\cdot$ --- reminding us
of the matrix nature of our efforts.

Fix $\ell$ once and for all, and let $B^n =B(*,GL_n(\R),GL_n(T_\ell\R))$.
An object in $B^n_q$ is a collection $m_{ij}$ of $n\times n$ matrices
in $\R$  for $0\leq i<j\leq q$, 
and for each $0\leq i\leq q$ a matrix
$m_{i\infty}$  in $T_\ell\R$, together with suitably compatible structural
isomorphisms $m_{ijk}\colon m_{ij}\cdot m_{jk}\to m_{ik}$.  The
matrices are drawn  from the ``weakly invertible components''.  The matrices
$m_{i\infty}$ and the structural isomorphisms relating these need
special  attention.  Each entry is in $T_\ell\R$, so $m_{i\infty}$ can be
viewed as  a pair $m_{i\infty}^\pm$ of matrices, and the structural
isomorphism  $m_{ij\infty}\colon m_{ij}\cdot m_{j\infty}\to
m_{i\infty}$  is a  tuple
$(m^\pm_{ij\infty},\phi^1_{ij\infty},\dots,
\phi^\ell_{ij\infty})$, where the 
$\phi^r_{ij\infty}\colon  x^r_{ij\infty}\to x^{r-1}_{ij\infty}\in
M_n(\R)$ for $r = 1, \dots, \ell$ are matrices of isomorphisms, and the
$m^\pm_{ij\infty}\colon
 m_{ij}\cdot  m_{j\infty}^\pm + x^0_{ij\infty}\to m_{i\infty}^\pm$
are isomorphisms.

The assumed 
commutativity of
$$
\xymatrix{ {(m_{ij} \cdot m_{jk})\cdot m_{k\infty}}
\ar[rr]^{\cong} \ar[d]_{m_{ijk} \cdot \id} &{}& {m_{ij} \cdot
(m_{jk}\cdot m_{k\infty})}\ar[d]^{\id \cdot
m_{jk\infty}}\\
{m_{ik} \cdot m_{k\infty}} \ar[r]_{m_{ik\infty}}&{{m_{i\infty}}}&
{m_{ij}\cdot m_{j\infty}} \ar[l]^{m_{ij\infty}}
}
$$
says that two morphisms from $(m_{ij} \cdot m_{jk})\cdot
m_{k\infty}$ agree: 
one is an isomorphism with source $(m_{ij} \cdot m_{jk})\cdot
m_{k\infty} + x^r_{ik\infty}$, the other one is an isomorphism
with source $(m_{ij} \cdot m_{jk})\cdot
m_{k\infty} + m_{ij}\cdot  x^r_{jk\infty}+
x^r_{ij\infty}$. Therefore we obtain the following equality.
\begin{lem}\label{1identity}
In the situation above one has the identity
$$x^r_{ik\infty}=m_{ij}\cdot  x^r_{jk\infty}+ x^r_{ij\infty}$$
for $r=0,1,\dots,\ell$,
and the diagram
$$\xymatrix{
m_{ij} \cdot
(m_{jk}\cdot  m_{k\infty}^\pm)+x^0_{ik\infty}
\ar@{=}[r]&
m_{ij}\cdot (m_{jk}\cdot m_{k\infty}^\pm) +
m_{ij}\cdot x^0_{jk\infty} + x^0_{ij\infty}
\ar[d]^{\id \cdot m_{jk\infty}^\pm+\id}\\
(m_{ij}\cdot m_{jk})\cdot m_{k\infty}^\pm+x^0_{ik\infty}
\ar[u]^{\cong}
\ar[d]_{m_{ijk}\cdot \id + \id}&
m_{ij}\cdot m_{j\infty}^\pm+ x^0_{ij\infty}
\ar[d]^{m_{ij\infty}^\pm}
\\
m_{ik}\cdot m_{k\infty}^\pm+x^0_{ik\infty}\ar[r]^{m_{ik\infty}^\pm}&
m_{i\infty}^\pm
}
$$
commutes.
\end{lem}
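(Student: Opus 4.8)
The plan is to unwind the definition of the one-sided bar construction, which asserts that a certain pentagon-type diagram in $T_\ell\R$ commutes, and to read off the consequences entry-wise in the pair-of-matrices description of $T\R$. Recall that an isomorphism in $T_\ell\R$ with target an object $m$ is recorded as a tuple: a chain of isomorphisms $\phi^\ell \to \cdots \to \phi^1$ of ``internal'' matrices $x^\ell \to \cdots \to x^0$ together with a map $m' \cdot (\text{minus part}) + x^0 \to (\text{target minus part})$, and similarly on the plus side. Composition in $T\R$ adds the internal objects $X \oplus Y$, and the monoidal action of $GL_n(\R)$ multiplies the internal object on the left and applies left distributivity. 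So the two composites around the associativity square of Definition for $B(*,\mathcal M,\mathcal T)$, applied to the vertex $(m_{ij}\cdot m_{jk})\cdot m_{k\infty}$, are two isomorphisms in $T_\ell\R$ with that common source; since all morphisms in $T_\ell\R$ are isomorphisms and (by faithfulness of translation) there is at most one morphism between two objects, these two isomorphisms must agree as soon as they have the same target, and in particular their recorded data must coincide.

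First I would spell out the two composites explicitly. Going down-then-right: applying $m_{ijk}\cdot\id$ contracts $m_{ij}\cdot m_{jk}$ to $m_{ik}$, producing internal object $x^l_{ik\infty}$, and then $m_{ik\infty}$ lands in $m_{i\infty}$ with the stated structure map $m_{ik\infty}^\pm$. Going right-then-down (through $m_{ij}\cdot(m_{jk}\cdot m_{k\infty})$): first $\id\cdot m_{jk\infty}$ acts by the $GL_n(\R)$-action of $m_{ij}$ on the morphism $m_{jk\infty}$ of $T_\ell\R$; by the definition of the action in Lemma \ref{lem:R-module} this multiplies the internal object $x^l_{jk\infty}$ by $m_{ij}$ on the left (inserting the left distributivity isomorphism), giving internal object $m_{ij}\cdot x^l_{jk\infty}$; then composing with $m_{ij\infty}$ in $T_\ell\R$ adds the internal object $x^l_{ij\infty}$, so the total internal object is $m_{ij}\cdot x^l_{jk\infty} + x^l_{ij\infty}$. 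Since the two morphisms are equal, in particular their internal objects in degree $l$ are equal, which is exactly the asserted identity $x^l_{ik\infty}=m_{ij}\cdot x^l_{jk\infty}+x^l_{ij\infty}$ for each $l=0,\dots,\ell$. The same equality of morphisms, now read off on the structure maps of the composites at $l=0$, is precisely the commutativity of the displayed square (using the coherence of left distributivity with the symmetric monoidal structure of $\R$ to identify the appropriate reassociations, and the convention that unnamed isomorphisms are the relevant structural ones).

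The mild point to be careful about — and the part I expect to require the most attention — is bookkeeping: one must match the recorded data of a $T_\ell\R$-morphism under the composition law (which adds internal objects in the order $X\oplus Y$, with the first factor's internal object innermost) and under the monoidal action (which multiplies on the left and inserts the distributivity isomorphism), and then check that the two resulting tuples for the two composites literally agree, not merely up to a canonical isomorphism. Because $\R$ is only \emph{strictly} bimonoidal and $T\R$ is genuinely a $2$-category with a non-strict symmetry showing up in quadratic terms, one should track the associativity/distributivity coherence isomorphisms carefully; but they are canonical and the coherence theorems for \bcs guarantee that all ways of reassociating and redistributing agree. Once the data are identified, the equality of internal objects gives the stated identity for all $l$, and the equality of the plus/minus structure maps at $l=0$ gives the commuting square; since at most one morphism exists between the relevant objects of $T_\ell\R$, no further verification that the two composites are equal is needed beyond invoking the associativity axiom of the bar construction.
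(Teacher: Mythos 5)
Your proof follows the same route the paper takes: the associativity coherence in the definition of $B(*,\mathcal M,\mathcal T)$ forces the two composites out of $(m_{ij}\cdot m_{jk})\cdot m_{k\infty}$ to be literally equal as morphisms in $GL_n(T_\ell\R)$, and comparing the recorded data of the two composites (the internal objects via composition adding $X\oplus Y$ and the module action multiplying by $m_{ij}$ on the left with distributivity) yields both the identity on the $x^l$'s and the commuting square on the $l=0$ structure maps. This is exactly the paper's argument, which it presents informally in the paragraph immediately preceding the lemma.

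One small caveat: the step where you invoke ``at most one morphism between two objects'' of $T_\ell\R$ is not needed and is in fact not true at the level you apply it. The uniqueness the paper establishes is for $2$-morphisms: between two given objects $(X,\alpha)$, $(Y,\beta)$ of the hom-category $T\M(A,B)$ there is at most one morphism. It is emphatically not the case that there is at most one morphism in $T_\ell\R$ between two given objects $(A^+,A^-)$ and $(B^+,B^-)$; there can be many such $(X,\alpha)$. Fortunately your argument never actually needs this: the equality of the two composites is already imposed by the coherence condition in the definition of the bar construction (which you correctly identify in your opening sentence), and once two morphisms in $T_\ell\R$ are equal, their data coincide by definition. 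So drop the uniqueness remark and the proof stands as is.
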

Here the map $\id \cdot m_{jk\infty}^\pm$ already incorporates
the distributivity isomorphism, as specified in Lemma
\ref{lem:R-module}.

A morphism $\alpha\colon m\to \tilde{m}$ in $B^n_q$ consists of an
$n \times n$ matrix of maps
$\alpha_{ij}\colon  m_{ij}\to \tilde{m}_{ij}$ in $\R$ for
$0\leq i<j\leq q$, and of morphisms
$(\alpha_{i\infty}^\pm,\psi^1_{i\infty},\dots,
\psi^\ell_{i\infty})\colon  m_{i\infty}^\pm\to \tilde{m}_{i\infty}^\pm$
in $T_\ell\R$ for $0 \leq i \leq q$, all
compatible with the  structure maps of $m$ and $\tilde{m}$. 
Thus there are matrices of objects $\xi^r_{i\infty}$ of $\R$ for
$0 \leq r \leq \ell$, each 
$\alpha_{i\infty}^\pm$ is a map $m_{i\infty}^\pm + \xi^0_{i\infty}
\ra \tilde{m}_{i\infty}^\pm$, and the $\psi^r_{i\infty}$ for
$r = 1, \dots, \ell$ are maps
$\xi^r_{i\infty}\to\xi^{r-1}_{i\infty}$ of matrices in $\R$.


The compatibility 
condition
$\alpha_{i\infty}m_{ij\infty}=\tilde m_{ij\infty}(\alpha_{ij}\cdot\alpha_{j\infty})$
allows us to draw the following conclusion.
\begin{lem}\label{2identity}
  In the situation above one has the identity
$$x_{ij\infty}^r+\xi^r_{i\infty}=
  m_{ij}\cdot\xi^r_{j\infty}+\tilde x_{ij\infty}^r$$
for each $r = 0, \dots, \ell$.
\end{lem}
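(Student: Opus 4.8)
The plan is to imitate the proof of Lemma \ref{1identity} verbatim, just with the compatibility condition for a \emph{morphism} in $B^n_q$ playing the role that the coherence diagram for the structural isomorphisms played there. Concretely, the defining condition
$$\alpha_{i\infty}m_{ij\infty}=\tilde m_{ij\infty}(\alpha_{ij}\cdot\alpha_{j\infty})$$
asserts that two morphisms in $T_\ell\R$ with source $m_{ij}\cdot m_{j\infty}$ and target $\tilde m_{i\infty}$ agree. First I would unwind each side as a tuple in $T_\ell\R$, remembering that a morphism in $T_\ell\R$ records an auxiliary sequence of objects of $M_n(\R)$ together with isomorphisms between consecutive terms and an isomorphism involving the ``$+$'' of the outermost auxiliary object with the plus/minus matrices. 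Composing $m_{ij\infty}$ (whose auxiliary data involves $x^l_{ij\infty}$, built from $x^l_{j\infty}$ via the relation of Lemma \ref{1identity}) with $\alpha_{i\infty}$ (whose auxiliary data involves $\xi^l_{i\infty}$), and comparing with the composite $\tilde m_{ij\infty}(\alpha_{ij}\cdot\alpha_{j\infty})$, the auxiliary objects on the two sides must be literally equal since, under the standing hypotheses on $\R$ (all morphisms isomorphisms, faithful additive translation), the category $T_\ell\R(A,B)$ has at most one morphism between any two objects, so equality of morphisms forces equality of the underlying auxiliary objects, not merely an isomorphism.

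Reading off this equality of auxiliary objects at level $l$ then gives exactly
$$x_{ij\infty}^l+\xi^l_{i\infty}= m_{ij}\cdot\xi^l_{j\infty}+\tilde x_{ij\infty}^l,$$
where the left-hand side is the auxiliary object obtained by first applying $m_{ij\infty}$ (contributing $x^l_{ij\infty}$) and then $\alpha_{i\infty}$ (contributing $\xi^l_{i\infty}$, added on because composition in $T\M$ adds the auxiliary objects), and the right-hand side comes from first applying $\alpha_{ij}\cdot\alpha_{j\infty}$, whose auxiliary object is $m_{ij}\cdot\xi^l_{j\infty}$ after the distributivity isomorphism of Lemma \ref{lem:R-module}, and then $\tilde m_{ij\infty}$, contributing $\tilde x^l_{ij\infty}$. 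One does this for $l=\ell,\ell-1,\dots,0$; for $l\geq 1$ the relevant objects are the $x^l$'s appearing in the chains $\phi^l,\psi^l$, and for $l=0$ the outermost objects $x^0$, $\xi^0$ figure in the plus/minus isomorphisms, but the bookkeeping of which object is which is identical in all cases.

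The only genuine subtlety is keeping the ``addition of auxiliary objects under composition'' convention consistent with the conventions fixed in Subsection \ref{sub:TM} (composition in $T\M$ sends $((Y,\beta),(X,\alpha))$ to $X\oplus Y$, in that order) and in the simplicial enrichment of $T\R$; a sign or an order swap there would put the summands on the wrong sides of the equation. I would therefore check explicitly the order in the composite $\alpha_{i\infty}\circ m_{ij\infty}$ versus $\tilde m_{ij\infty}\circ(\alpha_{ij}\cdot\alpha_{j\infty})$, using the formula for composition together with Lemma \ref{lem:R-module} for the module action, and confirm that the $x^l_{ij\infty}$ and $\tilde x^l_{ij\infty}$ terms land on opposite sides while the $\xi$-terms are transported across by $m_{ij}\cdot(-)$ exactly as stated. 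This is routine diagram-chasing in $T_\ell\R$ rather than a real obstacle, and once the conventions are pinned down the identity drops out immediately, in complete parallel with the first displayed identity of Lemma \ref{1identity}.
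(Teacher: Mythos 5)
Your proof is essentially the paper's proof: the paper simply states that the condition $\alpha_{i\infty}m_{ij\infty}=\tilde m_{ij\infty}(\alpha_{ij}\cdot\alpha_{j\infty})$ ``allows us to draw the following conclusion,'' and your proposal fills in exactly the bookkeeping the paper leaves implicit, in direct parallel with the stated argument for Lemma~\ref{1identity}. The decomposition is right: both sides are morphisms $m_{ij}\cdot m_{j\infty}\to\tilde m_{i\infty}$ in $M_n(T_\ell\R)$, composition in $T\M$ adds the auxiliary objects, and the $\R$-module action of Lemma~\ref{lem:R-module} multiplies the auxiliary object of $\alpha_{j\infty}$ by the source $m_{ij}$, yielding $x_{ij\infty}^l+\xi^l_{i\infty}$ on one side and $m_{ij}\cdot\xi^l_{j\infty}+\tilde x_{ij\infty}^l$ on the other.

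One small correction to the justification: you invoke the fact that $T_\ell\R(A,B)$ has at most one morphism between two given objects to conclude that equal morphisms have equal auxiliary objects. This fact (which is about $2$-morphisms in $T\M$, i.e.\ morphisms in the hom-\emph{categories}) is neither needed nor the right reason. The auxiliary objects $X^0,\dots,X^\ell$ are literally part of the defining data of a morphism in $T_\ell\R$ --- a morphism there \emph{is} the tuple $(X^0,\dots,X^\ell,\alpha^\pm,\phi^1,\dots,\phi^\ell)$ --- so equality of two morphisms immediately forces on-the-nose equality of the $X^l$'s, with no appeal to faithfulness or uniqueness. With that replaced, the argument is clean and matches the paper's intent.
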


\newcommand{\li}{\text{inc}}
\newcommand{\lj}{\text{jnc}}
\newcommand{\lk}{\text{knc}}
\newcommand{\lnc}{\text{lnc}}

\subsection{Start of the proof that $B(*,GL(\R),GL(T_\ell\R))$ is
contractible} \label{subsec:start}
In the following, $0$ and $1$ are short for zero resp.~unit matrices
over $\R$ of varying size. 
We will show that
$$ 
\colim\limits_{n} B^n=B(*,GL(\R),GL(T_\ell\R))
$$
is
contractible by showing that each matrix stabilization functor
$\text{in} \colon B^n\to B^{2n}$ is trivial in the homotopy category.
Here $\text{in}(m) = \left[\begin{smallmatrix}
    m&0\\0&1\end{smallmatrix} \right]$.

We regard the simplicial categories $B^n$ and $B^{2n}$ as bisimplicial
sets, by way of their respective nerves $NB^n$ and $NB^{2n}$.
To be precise, the $(p,q)$-simplices of $NB^{2n}$ are $N_p B_q^{2n}$.
By Lemma \ref{lem:split-mono} it then suffices to show that the composite
map $\li = \eta_* \circ \text{in} \colon NB^n \to z_* NB^{2n}$
is trivial in the homotopy category.
As remarked above, $z_*(NB^{2n})_{(0,q)} \cong
  (NB^{2n})_{(0,q)} = N_0 B^{2n}_q$, so the subdivision
operator $z_*$ does not make any difference before
we start to consider positive-dimensional simplices ($p>0$)
in the nerve direction.

Seeing that the
image lies in a single path component is easy: if $m\in
N_0 B^n_0 = \ob GL_n(T_\ell\R)$ then there is a path
$$ \left[
  \begin{matrix}
    m&  0\\
    0 & 1
  \end{matrix}\right]
\to \left[
  \begin{matrix}
    m&  m^-\\
    0 & 1
  \end{matrix}\right]\to
\left[
  \begin{matrix}
    m&  m^-\\
    (1,1) & 1
  \end{matrix}\right]\gets\left[
  \begin{matrix}
    1&  0\\
    (0,1) & 1
  \end{matrix}\right] \,.
$$
The first arrow represents the one-simplex in the bar direction
given by the matrix multiplication
$$\left[
  \begin{matrix}
    1&  m^-\\
    0 & 1
  \end{matrix}\right] \cdot \left[
  \begin{matrix}
    m&  0\\
    0 & 1
  \end{matrix}\right] =  \left[
  \begin{matrix}
    m&  m^-\\
    0 & 1
  \end{matrix}\right] \in GL_{2n}(T_\ell\R) \,.$$
The second arrow represents the one-simplex in the nerve direction
induced by the morphism 
$0 = (0,0) \to
(1,1)\in T_0\R \subset T_\ell\R$.
The third map represents the one-simplex in the bar
direction given by multiplication by
$$\left[
  \begin{matrix}
    m^+&  m^-\\
    1 & 1
  \end{matrix}\right]\in GL_{2n}(\R) \,.$$

The rest of this section extends this path to a full homotopy, from
$\li$ via maps $\lj$ and $\lk$ to a constant map $\lnc$.

\subsection{The homotopic maps $\li$ and $\lj$} \label{subsec:basiccon}

Recall that $\ell \geq 0$ is fixed,
$B^n = B(*, GL_n(\R), GL_n(T_\ell\R))$ is the simplicial category
given by the one-sided bar construction, and $NB^n \colon
[p],[q] \mapsto N_p B^n_q$ is the bisimplicial set
given by its degreewise nerve.  We already 
let $\li \colon NB^n \to z_* NB^{2n}$
be the composite of the matrix stabilization map $\text{in} \colon
NB^n \to NB^{2n}$ and the natural map $\eta_* \colon NB^{2n} \to z_* NB^{2n}$.

There is another map $\lj\colon NB^n\to z_*NB^{2n}$ which is homotopic to
$\li$.
On $N_0B_q^n$ it is easy to describe: if $m\in N_0B_q^n$, we declare
that
$X(m)$ is given by
$$X(m)_{ij}=
\begin{cases}
  \left[
  \begin{matrix}
    1&  m^-_{i\infty}\\
    0 & 1
  \end{matrix}\right]& \text{if $i<j=\infty$}\\
  & \\
  \left[
  \begin{matrix}
    1&  x^0_{ij\infty}\\
    0 & 1
  \end{matrix}\right]& \text{if $i<j<\infty$}
\end{cases}
$$
and let
$$\lj(m)=X(m)\cdot\li(m)
\in N_0B_q^{2n} = z_*(NB^{2n})_{(0,q)} \,.$$
Here $$
\lj(m)_{ij}=
\begin{cases}
  \left[
  \begin{matrix}
    m_{i\infty}&  m^-_{i\infty}\\
    0 & 1
  \end{matrix}\right]& \text{if $i<j=\infty$}\\
& \\
\left[
  \begin{matrix}
    m_{ij}&  x^0_{ij\infty}\\
    0 & 1
  \end{matrix}\right]& \text{if $i<j<\infty$}
\end{cases}
$$
with
$\lj(m)_{ijk}\colon \lj(m)_{ij}\cdot\lj(m)_{jk} \rightarrow
\lj(m)_{ik}$
being the isomorphisms induced by $m_{ijk}$ as follows:
for $k<\infty$ we use the
identity $x^0_{ik\infty}=m_{ij}\cdot x^0_{jk\infty}+x^0_{ij\infty}$ from Lemma
\ref{1identity} and obtain
$$\left[
  \begin{matrix}
    m_{ijk}& \id\\
    \id& \id
  \end{matrix}\right] \colon \left[
  \begin{matrix}
    m_{ij}&  x^0_{ij\infty}\\
    0 & 1
  \end{matrix}\right]\cdot\left[
  \begin{matrix}
    m_{jk}&  x^0_{jk\infty}\\
    0 & 1
  \end{matrix}\right]=\left[
  \begin{matrix}
    m_{ij}\cdot m_{jk}&  m_{ij}\cdot x^0_{jk\infty}+x^0_{ij\infty}\\
    0 & 1
  \end{matrix}\right]
\to\left[
  \begin{matrix}
    m_{ik}&  x^0_{ik\infty}\\
    0 & 1
  \end{matrix}\right]
$$
and for $k=\infty$ we use the string of isomorphisms
$$  \left[\begin{matrix} x^\ell_{ij\infty}& 0 \\ 0& 0 \end{matrix}\right]
  \rightarrow \ldots \rightarrow
  \left[\begin{matrix} x^0_{ij\infty}& 0 \\ 0& 0 \end{matrix}\right]$$
together with the isomorphism
\begin{align*}
\left[
  \begin{matrix}
    m_{ij\infty}& m^-_{ij\infty}\\
    \id& \id
  \end{matrix}\right] &\colon \left[
  \begin{matrix}
    m_{ij}&  x^0_{ij\infty}\\
    0 & 1
  \end{matrix}\right] \cdot \left[
  \begin{matrix}
    m_{j\infty}&  m^-_{j\infty}\\
    0 & 1
  \end{matrix}\right] + \left[
  \begin{matrix}
    x^0_{ij\infty}& 0\\
    0 & 0
  \end{matrix}\right] \\
&= \left[
  \begin{matrix}
    m_{ij}\cdot m_{j\infty} + x^0_{ij\infty}&  m_{ij}\cdot
m^-_{j\infty}+x^0_{ij\infty}\\
    0 & 1
  \end{matrix}\right]\to\left[
  \begin{matrix}
    m_{i\infty}&  m^-_{i\infty}\\
    0 & 1
  \end{matrix}\right] \,.
\end{align*}
We notice that the $T_\ell$-direction does not add any complications 
other than notational.  This continues to be true in general, so we simplify
notation by considering only the case $\ell=0$.

The relevant complications arise when one starts moving in the nerve
direction.
As the construction of the map $\lj$ is quite involved, we
will give some examples first. The impatient reader can skip this part and
restart reading in Subsection \ref{subsec:generalcon} where the formula in
the general case is given.

As an illustration, let $\ell=0$, $p=2$ and $q=0$ so that
$$
\begin{CD}
  m@=(m^0@<{(\xi^1,\alpha^1)}<< m^1@<{(\xi^2,\alpha^2)}<<
  m^2)\in N_2B_0^n = N_2 GL_n(T_0\R) \,.
\end{CD}
$$
Then $\lj(m)$ is captured by the picture
$$\xymatrix{
{\left[
    \begin{smallmatrix}
      m^2&(m^2)^-\\0&1
    \end{smallmatrix}
    \right]}\ar[r]^{{
    \left[\begin{smallmatrix}
1& \xi^2\\0&1
\end{smallmatrix}\right]
}}\ar[dr]_{{
    \left[\begin{smallmatrix}
1& \xi^2 + \xi^1\\0&1
\end{smallmatrix}\right]
}}&{\left[
    \begin{smallmatrix}
      m^2& (m^2)^- + \xi^2\\0&1
    \end{smallmatrix}
    \right]}\ar[d]^{{
    \left[\begin{smallmatrix}
1& \xi^1\\0&1
\end{smallmatrix}\right]
}}\ar[r]&{\left[
    \begin{smallmatrix}
      m^1&(m^1)^-\\0&1
    \end{smallmatrix}
    \right]}\ar[d]^{{
    \left[\begin{smallmatrix}
1& \xi^1\\0&1
\end{smallmatrix}\right]
}}\\
&{\left[
    \begin{smallmatrix}
      m^2& (m^2)^- + \xi^2 + \xi^1 \\
      0&1
    \end{smallmatrix}
    \right]}\ar[r]\ar[dr]&{\left[
    \begin{smallmatrix}
      m^1 & (m^1)^- + \xi^1\\0&1
    \end{smallmatrix}
    \right]}\ar[d]\\
&&{\left[
    \begin{smallmatrix}
      m^0&(m^0)^-\\0&1
    \end{smallmatrix}
    \right]}}$$
where the bar direction is written in the ``$
\begin{CD}
  g@>m>> mg
\end{CD}
$'' form, and the unlabeled arrows correspond to the nerve direction,
with  entries consisting of the appropriate $\alpha$'s.

An even more complicated example, essentially displaying all the
complexity of the general case: $\ell=0$, $p=q=1$, and
$\alpha\colon m^1\to m^0\in N_1B^n_1$ with $(m^u)^\pm_{01\infty}\colon
 (m^u)_{01}\cdot  (m^u)_{1\infty}^\pm + (x^u)^0_{01\infty}\to
 (m^u)_{0\infty}^\pm$ (for $u=0,1$ running in the nerve direction),
 $\alpha_{01}\colon m_{01}^1\to m_{01}^0$
and
 $\alpha_{i\infty}^\pm\colon (m^1)_{i\infty}^\pm+\xi_{i\infty}\to
 (m^0)_{i\infty}^\pm$.

Then $\lj(m)$ is the map from
$$z^*(\Delta[1]\times\Delta[1]) =\quad
\xymatrix{((0,0),0)&((0,1),0)\ar[l]&((1,1),0)\ar[l]\\
((0,0),1)\ar[u]&((0,1),1)\ar[l]\ar[u]&((1,1),1)\ar[l]\ar[u]\ar[ul]}$$
sending
the 
$((0,1),0)\gets((1,1),0)\gets((1,1),1)$ simplex to
$$
\begin{matrix}
  \left[
    \begin{smallmatrix}
      1& \xi_{0\infty} \\
      0&1
    \end{smallmatrix}
    \right]
&
\left[
    \begin{smallmatrix}
      m^1_{01}& x^1_{01\infty}+\xi_{0\infty}\\
      0&1
    \end{smallmatrix}
    \right]
&
\left[
    \begin{smallmatrix}
      m^1_{0\infty}& (m^1)^-_{0\infty}+\xi_{0\infty}\\
      0&1
    \end{smallmatrix}
    \right]\\
&

\left[
    \begin{smallmatrix}
      m^1_{01}& x^1_{01\infty}\\
      0&1
    \end{smallmatrix}
    \right]
&
\left[
    \begin{smallmatrix}
      m^1_{0\infty}& (m^1)^-_{0\infty}\\
      0&1
    \end{smallmatrix}
    \right]\\
&
&
\left[
    \begin{smallmatrix}
      m^1_{1\infty}& (m^1)^-_{1\infty}\\
      0&1
    \end{smallmatrix}
    \right]\\
\end{matrix}\in N_0 B_2^{2n} \,,
$$
the $((0,1),0)\gets((0,1),1)\gets((1,1),1)$ simplex to
$$
\begin{matrix}
\left[
    \begin{smallmatrix}
      m^1_{01}& x^0_{01\infty}\\
      0&1
    \end{smallmatrix}
    \right]
&
\left[
    \begin{smallmatrix}
      m^1_{01}& x^1_{01\infty}+\xi_{0\infty}\\
      0&1
    \end{smallmatrix}
    \right]
&
\left[
    \begin{smallmatrix}
      m^1_{0\infty}& (m^1)^-_{0\infty}+\xi_{0\infty}\\
      0&1
    \end{smallmatrix}
    \right]\\
&
  \left[
    \begin{smallmatrix}
      1& \xi_{1\infty} \\
      0&1
    \end{smallmatrix}
    \right]
&
\left[
    \begin{smallmatrix}
      m^1_{1\infty}& (m^1)^-_{1\infty}+\xi_{1\infty}\\
      0&1
    \end{smallmatrix}
    \right]\\
&
&
\left[
    \begin{smallmatrix}
      m^1_{1\infty}& (m^1)^-_{1\infty}\\
      0&1
    \end{smallmatrix}
    \right]\\
\end{matrix}\in N_0 B_2^{2n} \,,
$$
(here the identity from Lemma \ref{2identity} is used)
and the $(1,1)$-simplex
$\xymatrix{((0,0),0)&((0,1),0)\ar[l]\\
((0,0),1)\ar[u]&((0,1),1)\ar[l]\ar[u]}$
to


$$
\xymatrix{
{\begin{array}{cc}
\begin{matrix}
\left[
    \begin{smallmatrix}
      m^0_{01}& x^0_{01\infty}\\
      0&1
    \end{smallmatrix}
    \right]
&
\left[
    \begin{smallmatrix}
      m^0_{0\infty}& (m^0)^-_{0\infty}\\
      0&1
    \end{smallmatrix}
    \right]\\
&
\left[
    \begin{smallmatrix}
      m^0_{1\infty}& (m^0)^-_{1\infty}\\
      0&1
    \end{smallmatrix}
    \right]\\
\end{matrix}
\end{array}}
& & & & {\begin{array}{cc}
  \begin{matrix}
\left[
    \begin{smallmatrix}
      m^1_{01}& x^0_{01\infty}\\
      0&1
    \end{smallmatrix}
    \right]
&
\left[
    \begin{smallmatrix}
      m^1_{0\infty}& (m^1)^-_{0\infty}+\xi_{0\infty}\\
      0&1
    \end{smallmatrix}
    \right]\\
&
\left[
    \begin{smallmatrix}
      m^1_{1\infty}& (m^1)^-_{1\infty}+\xi_{1\infty}\\
      0&1
    \end{smallmatrix}
    \right]\\
\end{matrix}
\end{array}}
 \ar[llll]_{\begin{array}{cc}
\begin{matrix}
\left[
    \begin{smallmatrix}
      \alpha_{01}& \id\\
      \id & \id
    \end{smallmatrix}
    \right]
&
\left[
    \begin{smallmatrix}
      \alpha_{0\infty}& \alpha^-_{0\infty}\\
      \id & \id
    \end{smallmatrix}
    \right]\\
&
\left[
    \begin{smallmatrix}
      \alpha_{1\infty}& \alpha^-_{1\infty}\\
      \id & \id
    \end{smallmatrix}
    \right]\\
\end{matrix} \end{array}}
} \in N_1B_1^{2n} \,.
$$

Here we have employed the formula
$x_{01\infty}^1+\xi_{0\infty}=m_{01}^1\cdot\xi_{1\infty}+x_{01\infty}^0$
of Lemma \ref{2identity}.


\subsection{General version of $\lj$} \label{subsec:generalcon}
Consider 
\begin{equation}\label{eq:m}
\xymatrix@1{
{m=(m^0} & \ar[l]_(0.3){\alpha^1} {m^1} & \ar[l]_{\alpha^2} {\ldots} &
\ar[l]_{\alpha^p} {m^p)}} \in  N_pB^n_q \,.
\end{equation}

Then $(\alpha^u)_{i\infty}$ is given by the tuple
$$((\alpha^u)_{i\infty}^\pm\colon
(m^u)_{i\infty}^\pm + (\xi^u)^0_{i\infty} \to (m^{u-1})_{i\infty}^\pm \,,
\{ (\psi^u)^r_{i\infty} \colon (\xi^u)^r_{i\infty} \to
(\xi^u)^{r-1}_{i\infty} \})$$
for $u=1, \dots, p$, $r = 1, \dots, \ell$, 
but we simplify notation by setting
$\xi^u_{i\infty}=(\xi^u)_{i\infty}^0$,
$x^u_{ij\infty}=(x^u)_{ij\infty}^0$, and ignoring the $\psi$'s. Then
$\lj$ sends an $m$ as in \eqref{eq:m}  to the simplex
$\lj(m)\in z_*(NB^n)_{pq}$ with value at the $((a,b),c)$-vertex in
$z^*(\Delta[p]\times\Delta[q])$
given by
$$\left[
  \begin{matrix}
    (m^b)_{c\infty}&  (m^b)^-_{c\infty} +
\xi^b_{c\infty} + \dots + \xi^{a+1}_{c\infty}\\
    0 & 1
  \end{matrix}\right]
\in GL_{2n}(T_\ell\R)
$$
with the convention that the $\xi$'s only occur if $a+1\leq b$.
Higher simplices are given by the structural isomorphisms in $m$.
Note that  the elements in the off-diagonal blocks are actually all in $\R$.

More precisely, a triple $(\phi,b,\psi)$ where  $\phi\colon [r]\to [p]$
and $\psi\colon [r]\to[q]$ are in $\Delta$ and $\phi(r)\leq b\leq p$,
determines a $(0,r)$-simplex in $z^*(\Delta[p]\times\Delta[q])$, 
because $z^*(\Delta[p]\times\Delta[q])_{(0,r)} =
\Delta([r+1],[p])\times\Delta([r],[q])$ and $\phi$ together with $b$
determine an element in the first factor. We see
that  $\lj(m)(\phi,b,\psi)\in N_0B^{2n}_r$ is the element whose $(0\leq
i<j\leq r)$- and $(0\leq i\leq r<j=\infty)$-entries are
$$\left[
    \begin{matrix}
      (m^b)_{\psi(i)\psi(j)}&
  x^{\phi(j)}_{\psi(i)\psi(j)\infty} + \xi^{\phi(j)}_{\psi(i)\infty}
  + \dots + \xi^{\phi(i)+1}_{\psi(i)\infty} \\
      0&1
    \end{matrix}
    \right]$$
and
$$\left[
    \begin{matrix}
      (m^b)_{\psi(i)\infty}&
  (m^b)^-_{\psi(i)\infty} + \xi_{\psi(i)\infty}^b
  + \dots + \xi_{\psi(i)\infty}^{\phi(i)+1}\\
      0&1
    \end{matrix}
    \right] \,,
$$
respectively.  (As before, the $\xi$'s only occur
when $\phi(i)+1 \leq \phi(j)$ and $\phi(i)+1 \leq b$, respectively.)

Moving in the (nerve =) $b$-direction is easy 
because it amounts to connecting
two values $\lj(m)(\phi,b,\psi)$ and $\lj(m)(\phi,b',\psi)$ by morphisms.
Since this is determined by 
the one-skeleton, it is enough to describe the case $b'=b-1<b$.  On the $(0\leq
i<j\leq r)$-entries it is induced by $(\alpha^b)_{\psi(i)\psi(j)}\colon
(m^b)_{\psi(i)\psi(j)}\to (m^{b-1})_{\psi(i)\psi(j)}$ (in the upper
left hand corner, and otherwise the
identity), and on the $(0\leq i\leq r<j=\infty)$-entries it is given by
$(\xi^b_{\psi(i)\infty}, (\alpha^b)_{\psi(i)\infty})
\colon (m^b)_{\psi(i)\infty} \to
(m^{b-1})_{\psi(i)\infty}$ and $(\alpha^b)^-_{\psi(i)\infty}\colon
(m^b)^-_{\psi(i)\infty}+\xi^b_{\psi(i)\infty}\to
(m^{b-1})^-_{\psi(i)\infty}$ (in the upper row, and otherwise the identity).

Checking that this is well defined and
simplicial amounts to the same kind of checking as we have already
encountered, using the same identities.
One should notice that at no time during the verifications is the
symmetry of addition used.
It is used, however, for the isomorphism
that renders matrix
multiplication associative up to isomorphism.

The simplicial homotopy from $\li$ to $\lj$ is gotten by 
multiplications (in the  bar direction) by matrices of the form
$$\left[
    \begin{matrix}
      1&
x^{\phi(j)}_{\psi(i)\psi(j)\infty}+\xi^{\phi(j)}_{\psi(i)\infty}+\dots+
\xi^{\phi(i)+1}_{\psi(i)\infty}\\
      0&1
    \end{matrix}
    \right]
\qquad\text{and}\qquad
\left[
    \begin{matrix}
      1& (m^b)^-_{\psi(i)\infty} + \xi_{\psi(i)\infty}^b
  + \dots + \xi^{\phi(i)+1}_{\psi(i)\infty}\\
      0&1
    \end{matrix}
    \right] \,.
$$

\subsection{The homotopic maps $\lk$ and $\lnc$}

Consider the following variant $\lk$ of the map $\lj$:
using the same notation as for $\lj$, when evaluated on
$(\phi,b,\psi)$ where   $\phi\colon [r]\to [p]$ and $\psi\colon [r]\to[q]$
are in $\Delta$ and $\phi(r)\leq b\leq p$, $\lk(m)(\phi,b,\psi)\in
N_0B^{2n}_r$  is the element whose $(0\leq i<j\leq r)$- and  $(0\leq i\leq
r<j=\infty)$-entries are
$$\left[
    \begin{matrix}
      (m^b)_{\psi(i)\psi(j)}&
x_{\psi(i)\psi(j)\infty}^{\phi(j)}+\xi_{\psi(i)\infty}^{\phi(j)}+\dots+
\xi_{\psi(i)\infty}^{\phi(i)+1}\\
      0&1
    \end{matrix}
    \right]$$
and
$$
\left[
  \begin{matrix}
    (m^b)_{\psi(i)\infty} +
      (\xi_{\psi(i)\infty}^b + \dots + \xi_{\psi(i)\infty}^{\phi(i)+1},
        \xi_{\psi(i)\infty}^b+\dots+\xi_{\psi(i)\infty}^{\phi(i)+1})&
    (m^b)^-_{\psi(i)\infty} +
      \xi_{\psi(i)\infty}^b + \dots + \xi_{\psi(i)\infty}^{\phi(i)+1}\\
      (1,1)&1
  \end{matrix}
\right] \,,
$$
respectively.
The entries for $j=\infty$ can be written 
concisely as
$
\left[\begin{matrix}
  (m^b)_{\psi(i)\infty} + (\Xi,\Xi) & (m^b)^-_{\psi(i)\infty} + \Xi \\
  (1,1) & 1
\end{matrix}\right]
$
where $\Xi = \xi_{\psi(i)\infty}^b + \dots + \xi_{\psi(i)\infty}^{\phi(i)+1}$.
The entries for finite $j$ are the same as for $\lj$.

There is a natural map (in the nerve direction) from $\lj$ to $\lk$
(of the form $(X, \id) \colon (A,B)\to (A+X,B+X)\in T_\ell\R$ --- induced
by the identity), giving a homotopy.

Finally, let $\lnc \colon NB^n\to z_* NB^{2n}$ be
induced by  the constant map sending any matrix to $\left[
\begin{smallmatrix}
  1&0\\(0,1)&1
\end{smallmatrix}\right]
$.
Matrix multiplication yields
\begin{multline*} \left[\begin{matrix}
    (m^b)^+_{\psi(i)\infty} + \xi_{\psi(i)\infty}^b+\dots+
\xi_{\psi(i)\infty}^{\phi(i)+1}
  & (m^b)^-_{\psi(i)\infty} + \xi_{\psi(i)\infty}^b+\dots+
\xi_{\psi(i)\infty}^{\phi(i)+1}\\
    1 & 1
  \end{matrix}\right] \cdot \left[\begin{matrix}
  1&0\\(0,1)&1
\end{matrix}\right] \\
=
\left[
    \begin{matrix}
      (m^b)_{\psi(i)\infty}+(\xi_{\psi(i)\infty}^b+\dots+
\xi_{\psi(i)\infty}^{\phi(i)+1},\xi_{\psi(i)\infty}^b+\dots+
\xi_{\psi(i)\infty}^{\phi(i)+1})& (m^b)^-_{\psi(i)\infty}+
\xi_{\psi(i)\infty}^{c}+\dots+\xi_{\psi(i)\infty}^{\phi(i)+1}\\
      (1,1)&1
    \end{matrix}
    \right] \,.
\end{multline*}
With the same abbreviation as above, this reads
$$\left[\begin{matrix}
(m^b)^+_{\psi(i)\infty} + \Xi & (m^b)^-_{\psi(i)\infty} + \Xi \\ 1 & 1
\end{matrix}\right] \cdot
\left[\begin{matrix}
1 & 0 \\ (0,1) & 1
\end{matrix}\right] =
\left[\begin{matrix}
(m^b)_{\psi(i)\infty} + (\Xi,\Xi) & (m^b)^-_{\psi(i)\infty} + \Xi \\ (1,1) & 1
\end{matrix}\right] \,.$$
We obtain a homotopy from $\lnc$ to $\lk$.

Hence $\li$ is connected by a chain
of homotopies to a constant map.  Since $\eta_* \colon \id
\to z_*$ is a monomorphism in the homotopy category, this  means that
the stabilization map $\text{in} \colon B^n\to B^{2n}$ is homotopically
trivial, and so  $B(*,GL(\R),GL(T_\ell\R))= \colim\limits_{n} B^n$
is contractible for each $\ell\geq0$.  Hence $B(*, GL(\R), GL(T\R))$
is also contractible.
This concludes the proof of Theorem~\ref{thm:main}.

\end{document}